\newtheorem{theorem}{Theorem}[section]
\newenvironment{introtheorem}[1]
  {\intro}
  {\endintro}
\newtheorem{proposition}[theorem]{Proposition}
\newtheorem{corollary}[theorem]{Corollary}
\newtheorem{lemma}[theorem]{Lemma}
\newtheorem*{theorem*}{Theorem}
\newtheorem*{corollary*}{Corollary}
\newtheorem*{claim}{Claim}
\theoremstyle{definition}
\newtheorem{remark}{Remark}
\newtheorem*{notation}{Notation}
\newtheorem*{example}{Example}
\newtheorem*{examples}{Examples}
\numberwithin{equation}{section}
\newcommand{\tO}{\mathtt 0}                       
\newcommand{\tL}{\mathtt 1}                       
\newcommand{\tN}{\mathtt q}                       
\newcommand{\dN}{\mathbb N}                         
\newcommand{\dZ}{\mathbb Z}                         
\newcommand{\dC}{\mathbb C}                         
\newcommand{\abs}[1]{\left\lvert#1\right\rvert}     
\newcommand{\dens}{\operatorname{dens}}             
\DeclareMathOperator{\LandauO}{\mathcal{O}}         
\begin{document}

\baselineskip=17pt

\title[Divisibility of binomial coefficients]%
{An explicit generating function arising in counting binomial coefficients divisible by powers of primes}
\author[L. Spiegelhofer]{Lukas Spiegelhofer}
\address{Institute of Discrete Mathematics and Geometry,
Vienna University of Technology,
Wiedner Hauptstrasse 8--10, 1040 Vienna, Austria}
\email{lukas.spiegelhofer@tuwien.ac.at}
\author[M. Wallner]{Michael Wallner}
\email{michael.wallner@tuwien.ac.at}
\date{}
\thanks{The first author acknowledges support by the project MuDeRa (Multiplicativity, Determinism, and Randomness),
which is a joint project between the
ANR (Agence Nationale de la Recherche) and the FWF (Austrian Science Fund),
and also by project F5502-N26 (FWF),
which is a part of the Special Research Program
``Quasi Monte Carlo Methods: Theory and Applications''.
The second was supported by Project SFB F50-03 (FWF),
which is a part of the Special Research Program ``Algorithmic and Enumerative Combinatorics''.}
\begin{abstract}
For a prime $p$ and nonnegative integers $j$ and $n$ let $\vartheta_p(j,n)$ be the number of entries in the $n$-th row of Pascal's triangle that are exactly divisible by $p^j$.
Moreover, for a finite sequence $w=w_{r-1}\cdots w_0\neq 0\cdots 0$ in $\{0,\ldots,p-1\}$ we denote by $\lvert n\rvert_w$ the number of times that $w$ appears as a factor (contiguous subsequence) of the base-$p$ expansion
$n_{\mu-1}\cdots n_0$ of $n$.
It follows from the work of Barat and Grabner (\textit{Distribution of binomial coefficients and digital functions}, J. London Math. Soc. (2) 64(3), 2001),
that $\vartheta_p(j,n)/\vartheta_p(0,n)$ is given by a polynomial $P_j$ in the variables $X_w$,
where $w$ are certain finite words in $\{0,\ldots,p-1\}$,
and each variable $X_w$ is set to $\lvert n\rvert_w$.
This was later made explicit by Rowland (\textit{The number of nonzero binomial coefficients modulo $p^\alpha$}, J. Comb. Number Theory 3(1), 2011), independently from Barat and Grabner's work,
and Rowland described and implemented an algorithm computing these polynomials $P_j$.
In this paper, we express the coefficients of $P_j$ using generating functions,
and we prove that these generating functions can be determined explicitly by means of a recurrence relation.
Moreover, we prove that $P_j$ is uniquely determined,
and we note that the proof of our main theorem also provides a new proof of its existence.
Besides providing insight into the structure of the polynomials $P_j$,
our results allow us to compute them in a very efficient way.
\end{abstract}
\subjclass[2010]{Primary 11B65, 05A15; Secondary 11A63, 11B50, 05A16}

\keywords{binomial coefficients modulo powers of primes, exact enumeration, generating functions}
\maketitle
\section{Introduction}
The history of binomial coefficients in congruence classes modulo $m$ begins not later than in the middle of the 19th century,
when Kummer~\cite{K1852} stated his famous theorem on the highest prime power $p^m$ dividing a binomial coefficient $\binom nt$:
$m$ is the number of \emph{borrows} occurring in the subtraction $n-t$ in base $p$.
In other words, this is the number of indices $k$ such that $n\bmod p^k<t\bmod p^k$.
Kummer's theorem was generalised to multinomial and $q$-multinomial coefficients by Fray~\cite{F1967}, and to generalised binomial coefficients by Knuth and Wilf~\cite{KW1989}.

A complete list of results related to Pascal's triangle modulo powers of primes would go beyond the scope of any research paper;
we refer the reader to the surveys~\cite{G1997,Si1980} by Granville and Singmaster, respectively, for an overview of the topic.
The question also attracts other areas of research:
in~\cite[Section~14.6]{AS2003} and~\cite{AB1997}, connections with automatic sequences and combinatorics on words are highlighted.
Moreover, the paper~\cite{AS2008} considers the related question of counting coefficients equal to a given value of a polynomial over a finite field.

In this paper we restrict ourselves to questions concerning \emph{exact divisibility} of binomial coefficients by powers of primes.
This means that we are only concerned with the residue class $p^j$ modulo $p^{j+1}$,
in other words, we study the case $\nu_p\binom nt=j$,
where $\nu_p(m)$ denotes the largest $k$ such that $p^k\mid m$.

We therefore introduce the following notion, which is central in our paper.
Let $j$ and $n$ be nonnegative integers and $p$ a prime number, and define
\begin{equation*}
\vartheta_p(j,n)
=\left\lvert\left\{t\in\{0,\ldots,n\}:\nu_p\binom nt=j\right\}\right\rvert.
\end{equation*}
Put into words,
$\vartheta_p(j,n)$ is the number of entries in the $n$-th row of Pascal's triangle that are exactly divisible by $p^j$.
The case $j=0$ can be reduced to properties of the base-$p$ expansion of the row number $n$ by appealing to Lucas' congruence~\cite{L1878}.
This well-known congruence asserts that for $t\leq n$ having the (not necessarily proper) base-$p$ representations $n=(n_{\mu-1}\cdots n_0)_p$ and $t=(t_{\mu-1}\cdots t_0)_p$, we have
\[\binom nt\equiv\binom{n_{\mu-1}}{t_{\mu-1}}\cdots \binom{n_0}{t_0}\bmod p.\]
Since $p$ is a prime number,
we have $p\nmid\binom nt$ if and only if none of the factors is divisible by $p$,
which in turn is equivalent to $t_i\leq n_i$ for all $i<\mu$.
We obtain,
denoting by $\abs{n}_a$ the number of times the digit $a\neq 0$ occurs in the base-$p$ expansion of $n$,
\begin{equation*}
\vartheta_2(0,n)=2^{\abs{n}_1}
\end{equation*}
for the case $p=2$ (Glaisher~\cite{G1899}) and more generally (Fine~\cite{F1947})
\begin{equation}\label{eqn:fine}
\vartheta_p(0,n)=
\prod_{0\leq i<\mu}(n_i+1)
=2^{\abs{n}_1}3^{\abs{n}_2}4^{\abs{n}_3}\cdots p^{\abs{n}_{p-1}}.
\end{equation}
Lucas' congruence has been generalised and extended in different directions,
see for example~\cite{F1967}, \cite{K1968} (re-proved in~\cite{S1974a}), ~\cite{DW1990,G1992,G1997}; moreover~\cite{D1919} for an account of less recent results.
In order to be able to formulate our results concerning general $j\geq 0$,
we need some notation.
\begin{notation}
The letter $p$ always denotes a prime number;
we use typewriter font to indicate digits in the base-$p$ expansion,
except for variables representing digits.
For the $(p-1)$-st digit we write $\tN$,
a letter supposed to be a mnemonic relating to $\mathtt 9$ in the decimal expansion.
If $v$ is an infinite word over the alphabet $\{\tO,\ldots,\tN\}$
such that $v_i\neq \tO$ for only finitely many $i\geq 0$,
let $(v)_p=\sum_{i\geq 0} v_ip^i$ be the integer represented by $v$ in base $p$.
Moreover, if $w=w_{\mu-1}\cdots w_0\in\{\tO,\ldots,\tN\}^\mu$ contains at least one nonzero digit and $v$ is as above,
let $\abs{v}_w$ be the number of times that $w$ occurs as a factor of $v$.
More precisely,
\begin{equation*}
\abs{v}_w=\abs{\{i\geq 0:v_{i+\mu-1}\cdots v_i=w_{\mu-1}\cdots w_0\}}.
\end{equation*}
For finite words $v$ we extend the above notions by padding with zeros.
Moreover, if $n$ is a nonnegative integer and $n=(v)_p$,
we set $\abs{n}_w\coloneqq \abs{v}_w$.
Occurrences of factors may overlap:
for example, for $p=2$ we have $\abs{42}_{\tL\tO\tL\tO}
=\abs{\tL\tO\tL\tO\tL\tO}_{\tL\tO\tL\tO}=2$.
Moreover, as a consequence of the padding with zeros 
we have $\abs{\tL}_{\tL}=\abs{\tL}_{\tO\tL}=\abs{\tL}_{\tO\tO\tL}=\cdots=1$,
while $\abs{\tL}_{\tL\tO}=0$.
\end{notation}
The following statement is an easy reformulation of~\cite[Theorem~2]{R2011}.
The method used for proving this theorem is very similar to the method used in the older paper~\cite[Theorem~5]{BG2001},
which proves a less detailed form of the result,
but can be adapted to yield the full statement.
See also Remark~\ref{rem:RBG}.
\begin{introtheorem}{0}[Rowland~\cite{R2011}--Barat--Grabner~\cite{BG2001}]\label{thm:rowland}
Let $p$ be a prime and $j\geq 0$.
Then $\vartheta_p(j,n)/\vartheta_p(0,n)$ is given by a polynomial $P_j$ of degree $j$ in the variables $X_w$, where $w$ ranges over the set
\begin{equation}\label{eqn:Wj_def}
W_j=\bigl\{w\in\{\tO,\ldots,\tN\}^\mu:2\leq\mu\leq j+1,w_{\mu-1}\neq \tO,w_0\neq \tN\bigr\},
\end{equation}
and $X_w$ is set to $\abs{n}_w$.
\end{introtheorem}
Note that $W_0=\emptyset$ and $P_0(x)=1$.
Determining $\vartheta_p(j,n)/\vartheta_p(0,n)$ by means of this theorem is a two-step procedure:
\begin{equation}\label{eqn:theta_rep}
n\mapsto \bigl(\abs{n}_w\bigr)_{w\in W_j}\mapsto P_j\Bigl(\bigl(\abs{n}_w\bigr)_{w\in W_j}\Bigr)
=\frac{\vartheta_p(j,n)}{\vartheta_p(0,n)}.
\end{equation}
Barat and Grabner~\cite[Theorem~5]{BG2001} used a representation of $\vartheta_p(j,n)/\vartheta_p(0,n)$
of this kind in order to establish an asymptotic formula for the partial sums $\sum_{0\leq n<N}\vartheta_p(j,n)$.
Their Theorem~5 generalises the case $j=0$~\cite{FGKPT1994} (see also~\cite{BG1996,S1977}),
and yields a quantitative version of the statement ``any integer divides almost all binomial coefficients''~\cite{S1974c}.

Theorem~\ref{thm:rowland} implies, as noted by Rowland, that
$n\mapsto \vartheta_p(j,n)/\vartheta_p(0,n)$ is a $p$-\emph{regular sequence}
in the sense of Allouche and Shallit~\cite{AS1992,AS2003}.
We will however not follow this line of research in this paper.

In Proposition~\ref{prp:unique_polynomial} we will prove that a polynomial $P_j$ as in Theorem~\ref{thm:rowland} is uniquely determined,
so that we may talk about the coefficients of $P_j$ without ambiguity.
These polynomials are the main object of study in this paper,
and we want to obtain a better understanding of its coefficients.
Our main theorem (restated in Section~\ref{sec:main}) concerns the behaviour of the coefficients of a single monomial in the sequence $(P_j)_{j\geq 0}$ of polynomials.
\begin{theorem*}
Let $W$ be the set of all words
$w_{\mu-1}\cdots w_0\in\{\tO,\ldots,\tN\}^\mu$
such that $\mu\geq 2$, $w_{\mu-1}\neq \tO$ and $w_0\neq \tN$.
Assume that $w^{(1)},\ldots,w^{(\ell)}\in W$,
and $k_1,\ldots,k_{\ell}$ are positive integers.
Let $c_j$ be the coefficient of the monomial
\[X_{w^{(1)}}^{k_1}\cdots X_{w^{(\ell)}}^{k_{\ell}}\]
in the polynomial $P_j$.
Then
\[
\sum_{j\geq 0}c_jx^j =
\frac 1{k_1!}\bigl(\log r_{w^{(1)}}(x)\bigr)^{k_1}\cdots
\frac 1{k_{\ell}!}
\bigl(\log r_{w^{(\ell)}}(x) \bigr)^{k_{\ell}},
\]
where $r_w$ is a rational function defined at $0$ such that $r_w(0)=1$.
\end{theorem*}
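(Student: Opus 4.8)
The plan is to show that the whole statement is merely the coefficient‑extraction shadow of a single product formula, and then to prove that formula by a transfer‑matrix computation combined with a recurrence that resolves the overlaps between factor occurrences. Throughout write
\[
F(x,n)=\sum_{j\ge 0}\frac{\vartheta_p(j,n)}{\vartheta_p(0,n)}\,x^j ,
\]
so that, by Theorem~\ref{thm:rowland}, the coefficient of $x^j$ in $F(x,n)$ is $P_j$ evaluated at $\bigl(\abs{n}_w\bigr)_{w\in W_j}$. I claim the theorem is equivalent to the formal identity
\[
\sum_{j\ge 0}P_j\bigl((X_w)_{w\in W}\bigr)\,x^j=\prod_{w\in W}r_w(x)^{X_w},
\]
where $r_w(x)^{X_w}$ is read as $\exp\bigl(X_w\log r_w(x)\bigr)$. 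Indeed, expanding each exponential and multiplying gives $\prod_{w}\sum_{k\ge 0}\frac{X_w^k}{k!}(\log r_w)^k$, and reading off the coefficient of $X_{w^{(1)}}^{k_1}\cdots X_{w^{(\ell)}}^{k_\ell}$ (the $w^{(i)}$ being distinct) returns exactly $\prod_i\frac1{k_i!}(\log r_{w^{(i)}})^{k_i}$. So it suffices to establish this product formula.

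To pass from the formal identity to a numerical one I would compare the coefficient of each fixed power $x^j$ on both sides: both are polynomials in the finitely many variables $\{X_w:w\in W_j\}$, on the right because $\log r_w=\LandauO(x^{\abs{w}-1})$ with $\abs{w}\ge 2$, so only $w\in W_j$ can contribute to $x^j$. These two polynomials agree at every integer point $\bigl(\abs{n}_w\bigr)_{w\in W_j}$ once the numerical identity $F(x,n)=\prod_w r_w(x)^{\abs{n}_w}$ is known for all $n$; hence they coincide provided such points are Zariski dense, which is precisely the input furnished by Proposition~\ref{prp:unique_polynomial}. Thus everything comes down to the numerical product formula.

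For that identity I would compute $F(x,n)$ by a transfer matrix. By Kummer's theorem $\nu_p\binom nt$ is the number of carries in the base‑$p$ addition $t+(n-t)=n$, so $\sum_j\vartheta_p(j,n)x^j=\sum_{t}x^{\#\text{carries}}$, and recording the digitwise carry state $\{0,1\}$ turns this into the $(0,0)$ entry of a product $M(n_{\mu-1})\cdots M(n_0)$ with
\[
M(d)=\begin{pmatrix} d+1 & d \\ x(p-1-d) & x(p-d)\end{pmatrix}.
\]
Dividing each $M(d)$ by $d+1$ absorbs the normalisation $\vartheta_p(0,n)=\prod_i(n_i+1)$ of Fine's formula~\eqref{eqn:fine}, so that $F(x,n)=\bigl(N(n_{\mu-1})\cdots N(n_0)\bigr)_{00}$ with $N(d)=M(d)/(d+1)$. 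Expanding this entry over carry sequences exhibits $F(x,n)$ as a sum over families of pairwise disjoint \emph{excursions} (maximal blocks on which the carry equals $1$); a block on positions $a,\dots,b$ contributes a weight depending only on $w=n_b\cdots n_a$, equal to a constant times $x^{\abs{w}-1}$, and the entry and exit transitions force exactly $w_0\ne\tN$ and $w_{\abs{w}-1}\ne\tO$ with $\abs{w}\ge2$. Hence each excursion is an occurrence of a word $w\in W$ with weight $g_w(x)=c_wx^{\abs{w}-1}$, and
\[
F(x,n)=\sum_{\text{disjoint families }S}\ \prod_{\text{occ}\in S}g_{w(\text{occ})}(x).
\]

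The heart of the matter is to reorganise this disjoint‑occurrence sum into the product $\prod_w r_w^{\abs{n}_w}$. I would define $r_w$ by a recurrence on word length, setting $R_w:=F\bigl(x,(w)_p\bigr)=\prod_{v}r_v^{\abs{w}_v}$ and extracting
\[
r_w=1+\frac{g_w}{\prod_{v\prec w}r_v^{\abs{w}_v}},
\]
where $v\prec w$ runs over the proper factors of $w$ lying in $W$; small cases such as $r_{\tL\tO}=1+x/2$ and $r_{\tL\tO\tO}=1+g_{\tL\tO\tO}/r_{\tL\tO}$ for $p=2$ suggest this is the correct normalisation. That each $r_w$ is rational with $r_w(0)=1$ then follows inductively, since $g_w$ is a monomial in $x$ and the denominators are products of earlier $r_v$ with constant term $1$. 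The product formula itself I would prove by induction on the number of digits of $n$, peeling off the outermost excursion and matching the transfer‑matrix recursion against the recurrence for $r_w$. The main obstacle is exactly this last step: factor occurrences overlap, whereas the excursions of any single carry sequence are disjoint and the target product treats occurrences as independent, so the combinatorics of overlaps must be absorbed into the denominators $\prod_{v\prec w}r_v^{\abs{w}_v}$. Making this bookkeeping exact, and confirming that one and the same family $(r_w)_{w\in W}$ serves all $n$ simultaneously, is the technical core; the density input needed to return to the formal polynomial identity is comparatively routine given Proposition~\ref{prp:unique_polynomial}.
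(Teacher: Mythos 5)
Your high-level architecture coincides with the paper's: reduce the theorem to the numerical product identity $\overline T_n(x)=\prod_{w\in W}r_w(x)^{\abs{n}_w}$, expand via $\exp$--$\log$, note that $\log r_w(x)=\LandauO\bigl(x^{\abs{w}-1}\bigr)$ confines the $x^j$-coefficient to $w\in W_j$, and invoke Proposition~\ref{prp:unique_polynomial} to identify coefficients. All of that is sound. The gap is that you never prove the one step carrying real content, namely the product identity itself. You reduce it to reorganising a sum over \emph{disjoint} carry excursions into a product over (possibly overlapping) factor occurrences, and then state that "making this bookkeeping exact \dots is the technical core" --- but that inclusion--exclusion over overlaps \emph{is} the theorem, and no argument for it is given. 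The paper avoids this combinatorics entirely: it defines $r_w=\overline T_w\overline T_{w_{LR}}/\bigl(\overline T_{w_R}\overline T_{w_L}\bigr)$ outright and proves the product identity by a short telescoping induction on prefixes (Proposition~\ref{prp:telescope}), a formal cancellation valid for any family of series with constant term $1$; the structural fact $r_w=1+\LandauO\bigl(x^{\abs{w}-1}\bigr)$ is then a separate induction on right depth (Proposition~\ref{prp:r_numerator}).

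Moreover, the explicit recurrence you propose, $r_w=1+g_w/\prod_{v\prec w}r_v^{\abs{w}_v}$ with $g_w$ a \emph{monomial} $c_wx^{\abs{w}-1}$, is false as soon as overlaps occur, so the induction "peeling off the outermost excursion" cannot close as stated. Take $p=2$ and $w=\tL\tL\tO\tL\tO$, i.e.\ $n=26$: one computes $\overline T_{26}=1+x+\tfrac12x^2+\tfrac58x^3+\tfrac14x^4$ and $\prod_{v\prec w}r_v^{\abs{w}_v}=r_{\tL\tO}^2r_{\tL\tL\tO}r_{\tL\tO\tL\tO}$, whence
\begin{equation*}
\overline T_w-\prod_{v\prec w}r_v^{\abs{w}_v}=\frac{\tfrac14x^4}{1+x/2},
\end{equation*}
a genuinely rational function rather than a monomial. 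The correct denominator is $\overline T_{w_L}\overline T_{w_R}$ (Proposition~\ref{prp:r_numerator}), which here equals $(1+x/2)\prod_{v\prec w}r_v^{\abs{w}_v}$ because $\abs{w_L}_{\tL\tO}+\abs{w_R}_{\tL\tO}=3\neq 2=\abs{w}_{\tL\tO}$; a normalisation by the proper-factor counts of $w$ alone cannot detect this discrepancy. So your candidate $(r_w)$ is not the family that makes the product identity hold, and the proof does not go through without replacing both the recurrence and the missing overlap bookkeeping --- which is exactly what Propositions~\ref{prp:telescope} and~\ref{prp:r_numerator} supply.
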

The rational function $r_w$ can be determined explicitly by means of a recurrence,
see Section~\ref{sec:main}.
The easiest nontrivial example is $r_{\tL\tO}(x)=1+x/2$ ($p=2$).
Note that the coefficients $c_j$ always belong to a fixed monomial $X_{w^{(1)}}^{k_1}\cdots X_{w^{(\ell)}}^{k_{\ell}}$.
However, in order to increase readability we will not emphasize this relationship by additional sub- or superscripts.
It will always be clear from the context which monomial is referred to.

As a direct consequence of our results we will obtain the following corollary.
\begin{corollary*}
Let $p=2$.
The coefficient $c_j$ of the monomial $X_{\tL\tO}$ in $P_j$ equals
$\left[x^j\right]\log(1+x/2)$.
In particular,
\[\sum_{j\geq 0}c_j=\log(3/2).\]
\end{corollary*}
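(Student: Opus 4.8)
The plan is to obtain the corollary as an immediate specialization of the main theorem, so the only real work is to identify the relevant data and then evaluate the resulting generating function. Taking $p=2$, the monomial $X_{\tL\tO}$ corresponds to the case $\ell=1$, $w^{(1)}=\tL\tO$, and $k_1=1$. First I would check that $\tL\tO$ lies in the set $W$ of the main theorem: its length is $\mu=2\geq 2$, its leading digit $w_1=\tL\neq\tO$, and its trailing digit $w_0=\tO$ satisfies $w_0\neq\tN$ (for $p=2$ we have $\tN=\tL$). Hence the hypotheses are met, and the main theorem yields
\[
\sum_{j\geq 0}c_jx^j=\frac{1}{1!}\bigl(\log r_{\tL\tO}(x)\bigr)^{1}=\log r_{\tL\tO}(x).
\]

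Next I would insert the explicit value of the rational function $r_{\tL\tO}$. Running the recurrence of Section~\ref{sec:main} in the case $p=2$ produces $r_{\tL\tO}(x)=1+x/2$, the easiest nontrivial instance noted just after the main theorem. Substituting this gives $\sum_{j\geq 0}c_jx^j=\log(1+x/2)$, and extracting coefficients yields $c_j=\left[x^j\right]\log(1+x/2)$, as claimed. As a sanity check, $c_0=\log 1=0$, consistent with $P_0=1$ containing no $X_{\tL\tO}$ term.

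For the ``in particular'' statement I would evaluate the series at $x=1$. The power series $\log(1+x/2)=\sum_{j\geq 1}\frac{(-1)^{j-1}}{j\,2^{j}}x^{j}$ has radius of convergence $2$, its only singularity being at $x=-2$; thus $x=1$ lies strictly inside the disc of convergence, the series converges there (absolutely), and
\[
\sum_{j\geq 0}c_j=\sum_{j\geq 0}\left[x^j\right]\log(1+x/2)=\log(1+1/2)=\log(3/2).
\]
Once the main theorem is available, there is no serious obstacle: the only points requiring care are the verification that $\tL\tO\in W$, the identification $r_{\tL\tO}(x)=1+x/2$ from the recurrence, and the elementary analyticity argument that legitimizes the substitution $x=1$.
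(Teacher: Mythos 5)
Your proposal is correct and follows essentially the same route as the paper's own one-line proof: specialize Theorem~\ref{thm:main} to the single word $\tL\tO$, identify $r_{\tL\tO}(x)=\overline T_2(x)=1+x/2$, and use the fact that $\log(1+x/2)$ has no singularity in the closed unit disc to justify evaluating at $x=1$. The extra details you supply (membership of $\tL\tO$ in $W$, the radius of convergence being $2$, the sanity check $c_0=0$) are all consistent with the paper.
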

This special case confirms an observation by Rowland~\cite{R2011},
who noted that a plot of the first few partial sums
$c'_j=c_0+\cdots+c_{j-1}$ ``suggests that the limit of this sequence exists''.
He computed the first seven polynomials
\[ P'_j=P_0+\cdots+P_{j-1} \]
with the help of his Mathematica package {\sc BinomialCoefficients},
which is based on his paper~\cite{R2011} and available from his website,
and determined the coefficients $c'_j$ that way.
By the above corollary the limit does exist indeed,
and its value is $\log(3/2)$.
It is however not true for each monomial $M$ that the sequence of coefficients of $M$ in $P'_j$ converges as $j\rightarrow \infty$,
nor is it the case that all coefficients of $P'_j$ are nonnegative.
A simultaneous counterexample for both questions is given by $X_{\tL\tO\tL\tO}$ (see the examples after Corollary~\ref{cor:behaviours}).
The sequence of coefficients of this monomial has the generating function
\[  \log\Bigl(1+\tfrac 12x^3/\bigl(1+x/2\bigr)^2\Bigr),  \]
which has a unique dominant singularity $x_0\sim-0.86408$.
Therefore negative signs occur infinitely often and the sequence of coefficients diverges to $\infty$ in absolute value (this is true for the coefficients in $P_j$ as well as in $P'_j$).

While the above results concern the behaviour of a single monomial in different polynomials $P_j$,
we will also prove an ``orthogonal'' result,
namely an asymptotic estimate of the number of nonzero coefficients in $P_j$ and $P'_j$ (Corollary~\ref{cor:number_of_terms}).

The results that we have outlined above provide answers to questions posed by Rowland~\cite{R2011} at the end of his paper.
For more details, we refer to Section~\ref{sec:main}.
Finally, we want to note that our main theorem together with the recurrence for $r_w$ enables us to compute the polynomials $P_j$ very efficiently (see Remark~\ref{rem:number_of_terms}).

We will also use the following notations in this article.
The integer $s_2(n)\coloneqq\abs{n}_\tL$ is the \emph{sum of digits} of $n$ in base $2$,
more generally $s_p(n)\coloneqq\abs{n}_\tL+2\abs{n}_{\mathtt 2}+\cdots+(p-1)\abs{n}_{\tN}$ is the sum of digits of $n$ in base $p$.
For a finite word $w$ we denote by $\abs{w}$ the length of $w$.
Finally, $\dN$ denotes the set of nonnegative integers.

\smallskip\noindent
\textit{Plan of the paper.}
In Section~\ref{sec:rec} we will meet the fundamental recurrence relation for the values $\vartheta_p(j,n)$, found by Carlitz~\cite{C1967},
while in Section~\ref{sec:higherpowers} we list some of the polynomials $P_j$ for the case $p=2$.
In Sections~\ref{sec:computing} and~\ref{sec:asymp}, we will state in detail the results we announced above, and study the rational functions $r_w$ more carefully.
Section~\ref{sec:simplified} gives an alternative form of the fundamental recurrence relation for $\vartheta_p(j,n)$,
which can be written as an elegant but enigmatic infinite product.
This also yields a new proof of Carlitz' recurrence relation.
Finally, we note in Section~\ref{sec:columns} that we can reuse the polynomials $P_j$ for the columns in Pascal's triangle.
Proofs not given in the main section are stated in Section~\ref{sec:proofs}.
\subsection{A recurrence for the values $\vartheta_p(j,n)$, and the case $j=1$}
\label{sec:rec}
Carlitz~\cite{C1967} gave a recurrence relation for the values $\vartheta_p(j,n)$,
which also involves another family $\psi_p$ defined by%
\footnote{Our notation differs slightly from Carlitz' who wrote $\theta_j(n)$ instead of $\vartheta_p(j,n)$ and $\psi_j(n)$ instead of $\psi_p(j,n)$, omitting $p$ altogether.}
\begin{equation*}
\psi_p(j,n)
=\abs{\left\{t\in\{0,\ldots,n\}:\nu_p\binom nt=j-\nu_p(n+1)\right\}}.
\end{equation*}
He then obtains \cite[Equations~(1.7)--(1.9)]{C1967}
for $n\geq 0$ and $j\geq 1$, using the convention $\psi_p(j,-1)=0$,
\begin{equation}\label{eqn:carlitz_rec}
\begin{aligned}
\vartheta_p(j,pn+a)&=(a+1)\vartheta_p(j,n)\\
&+(p-a-1)\psi_p(j-1,n-1),&0\leq a<p;\\
\psi_p(j,pn+a)&=(a+1)\vartheta_p(j,n)\\
&+(p-a-1)\psi_p(j-1,n-1),&0\leq a<p-1;\\
\psi_p(j,pn+p-1)&=p\psi_p(j-1,n).
\end{aligned}
\end{equation}
Rewriting the recurrence~\eqref{eqn:carlitz_rec} using the obvious identity
\[\psi_p(j,n)=\begin{cases}\vartheta_p(j-\nu_p(n+1),n),&j\geq \nu_p(n+1);\\
0,&j<\nu_p(n+1),\end{cases}\]
we obtain for $0\leq a<p$
\begin{multline}\label{eqn:theta_rec}
\vartheta_p(j,pn+a)=(a+1)\vartheta_p(j,n)\\+
\begin{cases}
(p-a-1)\vartheta_p(j-1-\nu_p(n),n-1),&j>\nu_p(n);\\
0,&j\leq \nu_p(n).\end{cases}
\end{multline}
Among other things, Carlitz evaluates $\vartheta_p(j,n)$ for special values of $n$,
using associated generating functions.
Moreover, he proves the explicit formula~\cite[Equation~(2.5)]{C1967},
saying that for the base-$p$ expansion $n=\sum_{i=0}^{\mu-1}n_ip^i$ we have
\begin{equation*}
\vartheta_p(1,n)=\sum_{0\leq i<\mu-1}
(n_{\mu-1}+1)\cdots(n_{i+2}+1)n_{i+1}(p-n_i-1)(n_{i-1}+1)\cdots(n_0+1).
\end{equation*}
By~\eqref{eqn:fine} this implies that
\begin{equation*}
\frac{\vartheta_p(1,n)}{\vartheta_p(0,n)}
=
\sum_{0\leq i<\mu-1}
\frac{n_{i+1}}{n_{i+1}+1}\cdot\frac{p-n_i-1}{n_i+1}.
\end{equation*}
In particular, counting identical summands, we obtain
\begin{equation}\label{eqn:carlitz_alt2}
\frac{\vartheta_p(1,n)}{\vartheta_p(0,n)}
=
\sum_{\substack{0\leq c,a<p\\c\neq 0,a\neq p-1}}
\frac c{c+1}\cdot \frac{p-a-1}{a+1}\abs{n}_{ca}.
\end{equation}
Note that we defined the quantity $\abs{n}_{ca}$ as the number of occurrences of $ca=n_{i+1}n_i$ in the base-$p$ expansion $n=\sum_{i=0}^\infty n_ip^i$.
Since $c$ is nonzero, this is equal to the number of occurrences of this pattern for $0\leq i<\mu-1$.
For the prime $p=2$ only one summand remains,
yielding the formula
\begin{equation*}
\frac{\vartheta_2(1,n)}{\vartheta_2(0,n)}=\frac 12\abs{n}_{\tL\tO}.
\end{equation*}
This formula was observed by Howard~\cite[Equation~(2.4)]{H1971},
see also~\cite[Theorem~2.2]{H1970}.
(The latter is however not correct if $n$ is a power of $2$.)
\subsection{The polynomials $P_j$ for $j>1$}\label{sec:higherpowers}
In 1971, Howard~\cite{H1971} also found formulas for
$\vartheta_2(2,n)$, $\vartheta_2(3,n)$, and $\vartheta_2(4,n)$
in terms of factor counting functions $\abs{n}_w$.
In different notation, he obtained the formulas
\begin{align*}
\frac{\vartheta_2(2,n)}{\vartheta_2(0,n)}&=
-\frac 18\abs{n}_{\tL\tO}
+\frac 18\abs{n}_{\tL\tO}^2
+\abs{n}_{\tL\tO\tO}
+\frac 14\abs{n}_{\tL\tL\tO},\\[2mm]
\frac{\vartheta_2(3,n)}{\vartheta_2(0,n)}&=
\frac 1{24}\abs{n}_{\tL\tO}
-\frac 1{16}\abs{n}_{\tL\tO}^2
-\frac 12\abs{n}_{\tL\tO\tO}
-\frac 18\abs{n}_{\tL\tL\tO}
+\frac 1{48}\abs{n}_{\tL\tO}^3
+\frac 12\abs{n}_{\tL\tO}\abs{n}_{\tL\tO\tO}\\
&+\frac 18\abs{n}_{\tL\tO}\abs{n}_{\tL\tL\tO}
+2\abs{n}_{\tL\tO\tO\tO}
+\frac 12\abs{n}_{\tL\tO\tL\tO}
+\frac 12\abs{n}_{\tL\tL\tO\tO}
+\frac 18\abs{n}_{\tL\tL\tL\tO},\\[2mm]
\frac{\vartheta_2(4,n)}{\vartheta_2(0,n)}&=
-\frac 1{64}\abs{n}_{\tL\tO}
+\frac{11}{384}\abs{n}_{\tL\tO}^2
-\frac 14\abs{n}_{\tL\tO\tO}
+\frac 1{32}\abs{n}_{\tL\tL\tO}
-\frac 1{64}\abs{n}_{\tL\tO}^3\\
&-\frac 38\abs{n}_{\tL\tO}\abs{n}_{\tL\tO\tO}
-\frac 3{32}\abs{n}_{\tL\tO}\abs{n}_{\tL\tL\tO}
-\abs{n}_{\tL\tO\tO\tO}
-\frac 12\abs{n}_{\tL\tO\tL\tO}
-\frac 12\abs{n}_{\tL\tL\tO\tO}\\
&-\frac 1{16}\abs{n}_{\tL\tL\tL\tO}
+\frac 1{384}\abs{n}_{\tL\tO}^4
+\frac 18\abs{n}_{\tL\tO}^2\abs{n}_{\tL\tO\tO}
+\frac 1{32}\abs{n}_{\tL\tO}^2\abs{n}_{\tL\tL\tO}
+\frac 12\abs{n}_{\tL\tO\tO}^2\\
&+\frac 14\abs{n}_{\tL\tO\tO}\abs{n}_{\tL\tL\tO}
+\frac 1{32}\abs{n}_{\tL\tL\tO}^2
+\abs{n}_{\tL\tO}\abs{n}_{\tL\tO\tO\tO}
+\frac 14\abs{n}_{\tL\tO}\abs{n}_{\tL\tO\tL\tO}\\
&+\frac 14\abs{n}_{\tL\tO}\abs{n}_{\tL\tL\tO\tO}
+\frac 1{16}\abs{n}_{\tL\tO}\abs{n}_{\tL\tL\tL\tO}
+4\abs{n}_{\tL\tO\tO\tO\tO}
+\abs{n}_{\tL\tO\tO\tL\tO}
+\abs{n}_{\tL\tO\tL\tO\tO}\\
&+\frac 14\abs{n}_{\tL\tO\tL\tL\tO}
+\abs{n}_{\tL\tL\tO\tO\tO}
+\frac 14\abs{n}_{\tL\tL\tO\tL\tO}
+\frac 14 \abs{n}_{\tL\tL\tL\tO\tO}
+\frac 1{16}\abs{n}_{\tL\tL\tL\tL\tO}.
\end{align*}
Moreover, Howard~\cite{H1973} found an expression for $\vartheta_p(2,n)$ for general primes $p$; see also~\cite{HSW1997b,W1990}.
We also refer to Spearman and Williams~\cite[Theorem~1]{SW1999}.
They re-proved the formulas above by expressing $\vartheta_2(j,n)/\vartheta_2(0,n)$
as a sum of nonoverlapping subwords of the binary expansion of $n$.
We note that the factors that are counted in the expressions for $\vartheta_2(j,n)$ always start with the digit $\tL$ (read from left to right) and end with the digit $\tO$.

That is, the words $w$ occurring in these expressions belong to the set $W_j$ defined in Theorem~\ref{thm:rowland}, for some $j\geq 1$.
By this theorem we can always require the condition $w\in W_j$,
while Proposition~\ref{prp:unique_polynomial} ensures uniqueness of an expression for~$\vartheta_2(j,n)$ as above.

We refrained from listing formulas for $j\geq 5$ for the obvious reason: $P_5$ contains $69$ monomials, $P_6$ already $174$.
\begin{remark}\label{rem:RBG}
As we noted before, the statement of the Theorem~\ref{thm:rowland} formulated by Rowland can already be found implicitly in Barat and Grabner~\cite{BG2001}.
That is, their method of proof can be adapted to show the theorem.
More precisely, in the course of proving Theorem~5 in that paper,
they proved that $\vartheta_p(j,n)/\vartheta_p(0,n)$ is a sum of products of block-additive functions.
Here a function $f:\dN\rightarrow\dC$ is called $\ell$-\emph{block-additive} in base $p$,
if there is a function $F:\{\tO,\ldots,\tN\}^\ell\rightarrow\dC$ satisfying $F(0,\ldots,0)=0$ such that for the base-$p$ expansion $n=\sum_{i\geq 0}\varepsilon_i p^i$ we have
\[f(n)=\sum_{i\geq 0}F(\varepsilon_{i+\ell-1},\ldots,\varepsilon_i).\]
These functions were first defined by Cateland in his thesis~\cite{C1992}.
We note that $\ell$-block-additive functions are precisely the complex linear combinations of factor counting functions $\abs{\cdot}_w$, where $w$ contains a nonzero letter and the length $\abs{w}$ is bounded by~$\ell$.
It follows from~\cite[(3.3),~(3.4)]{BG2001}
that the $\ell$-block-additive functions occurring in the representation of $\vartheta_p(j,n)/\vartheta_p(0,n)$ take only those factors $(w_{\mu-1}\cdots w_0)\in\{\tO,\ldots,\tN\}^\mu$ into account such that $w_{\mu-1}\neq\tO$ and $w_0\neq \tN$.
Moreover, enhancing the induction hypothesis in the proof of~\cite[Theorem~5]{BG2001},
it can be shown that only $\ell$-block-additive functions, where $1\leq\ell\leq j$, appear,
and that the occurring products of block-additive functions have length $\leq j$.
\end{remark}
Rowland~\cite{R2011} used an approach very similar to Barat and Grabner's~\cite{BG2001} (see also Spearman and Williams~\cite{SW1999}) in order to obtain Theorem~\ref{thm:rowland}.
More precisely, it follows from the proof of this theorem 
that the monomials $X_{w^{(1)}}\cdots X_{w^{(\ell)}}$ occurring in the polynomial $P_j$ satisfy
\begin{equation}\label{eqn:RBG_implicit}
\bigl\lvert w^{(1)}\bigr\rvert+\cdots +\bigl\lvert w^{(\ell)}\bigr\rvert-\ell\leq j.
\end{equation}
For example, if $p=2$ and $j=2$,
only the monomials $1$, $X_{\tL\tO}$, $X_{\tL\tO}^2$, $X_{\tL\tO\tO}$ and $X_{\tL\tL\tO}$ can occur.
Based on~\eqref{eqn:RBG_implicit} we will derive in Corollary~\ref{cor:number_of_terms} an upper bound for the number of monomials in $P_j$.

We note that we always write words from right to left,
since our interest in them stems from base-$p$ expansions of an integer.
Nevertheless, we call a \emph{prefix} of a word a contiguous subword containing the leftmost letter, while a \emph{suffix} is a contiguous subword containing the rightmost letter.
\section{Results}\label{sec:main}
\subsection{Computing the coefficients of $P_j$}\label{sec:computing}
Let $p$ be a prime number throughout this section.
For brevity of notation, we omit the index $p$ whenever there is no risk of confusion.
As in Theorem~\ref{thm:rowland}, let
\[    W_j=\bigl\{w\in\{\tO,\ldots,\tN\}^\mu:2\leq\mu\leq j+1,w_{\mu-1}\neq \tO,w_0\neq \tN\bigr\},    \]
moreover we define the set of \emph{admissible} words,
\[    W=\bigcup_{j\geq 1}W_j.    \]
We also define
\begin{align*}\widetilde W_j&=\bigl\{w\in\{\tO,\ldots,\tN\}^\mu:1\leq\mu\leq j+1,w_{\mu-1}\neq \tO\bigr\},\\
\widetilde W&=\bigcup_{j\geq 0}\widetilde W_j.    \end{align*}

In order to get meaningful statements on the coefficients of $P_j$,
we have to show that the polynomial $P_j$ is well-defined, i.e., uniquely determined.
Note that it is not clear a priori that there is only one polynomial $P_j$ representing $\vartheta_p(j,n)/\vartheta_p(0,n)$ as in~\eqref{eqn:theta_rep}:
the values inserted into this polynomial are not independent of each other,
therefore we can not use Lagrange interpolation directly for establishing uniqueness.
For example, we have $\abs{n}_{\tL\tO}\geq \abs{n}_{\tL\tO\tO}$ for all $n$,
so that not all tuples $(n_w)_{w\in W_j}$ of nonnegative integers can occur as family
$(\abs{n}_w)_{w\in W_j}$ of block counts of a nonnegative integer $n$.
Moreover, for the polynomial to be unique it is necessary that the blocks we are counting satisfy some restrictions,
since there are obvious identities such as $\abs{n}_{\tL}=\abs{n}_{\tO\tL}+\abs{n}_{\tL\tL}$.
We will show that the restriction
$w_{\mu-1}\neq \tO,w_0\neq \tN$ leads to a unique polynomial $P_j$ after all.
\begin{proposition}\label{prp:unique_polynomial}
There is at most one polynomial $P_j$ in the variables $X_w$,
where $w\in W$, such that
\[\frac{\vartheta_p(j,n)}{\vartheta_p(0,n)}
=P_j\left(\left(\abs{n}_w\right)_{w\in W}\right)\]
for all $n\geq 0$.
\end{proposition}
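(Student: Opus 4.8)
The plan is to recast the proposition as an algebraic independence statement. An equivalent formulation is: the only polynomial $Q$ in the variables $X_w$ ($w\in W$) with $Q\bigl((\abs{n}_w)_{w\in W}\bigr)=0$ for all $n\in\dN$ is $Q=0$; applying this to the difference of two representing polynomials yields the proposition. Since any particular $Q$ involves only finitely many variables, I would fix the finite set $W'\subseteq W$ of words occurring in $Q$ and show that the set of \emph{achievable count vectors} $\bigl\{(\abs{n}_{w'})_{w'\in W'}:n\in\dN\bigr\}\subseteq\dN^{W'}$ is Zariski dense, so that no nonzero polynomial can vanish on it.

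To produce such a dense family I would build integers by concatenation. Let $\ell=\max_{w'\in W'}\abs{w'}$ and fix the separator word $Z=\tN^\ell\tO^\ell$. For a vector $\mathbf t=(t_w)_{w\in W'}$ of positive integers, let $n(\mathbf t)$ be the integer whose base-$p$ expansion is obtained by writing, for each $w\in W'$, exactly $t_w$ copies of $w$, with a copy of $Z$ inserted after every copy. The two defining conditions of $W$ are precisely what isolate the copies: because every $w'$ begins with a nonzero digit, no occurrence of $w'$ can have its top in a block of zeros (neither the leading zero-padding nor the $\tO^\ell$ part of a separator), and because every $w'$ ends in a digit $\neq\tN$, no occurrence can end inside the $\tN^\ell$ part of a separator. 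A short case analysis then shows that every occurrence of a $w'$ lies entirely inside a single copy of some $w$ — contributing the number of factor occurrences of $w'$ in $w$ — with the sole exception of the words of the shape $\tN^a\tO^b$, each of which occurs exactly once per separator, straddling the unique $\tN$-to-$\tO$ junction.

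Writing $E_{w',w}$ for the number of occurrences of $w'$ as a factor of $w$, this gives $\abs{n(\mathbf t)}_{w'}=\sum_{w\in W'}E_{w',w}t_w+(\#\text{separators})\cdot[\,w'\text{ has the form }\tN^a\tO^b\,]$. Ordering $W'$ by non-decreasing length, the matrix $E$ is unitriangular — one has $E_{w,w}=1$, while $E_{w',w}=0$ whenever $\abs{w'}>\abs{w}$ — hence invertible. Treating the number of separators as an independent parameter $s$ (by appending $s$ extra copies of $Z$), the evaluation map $(\mathbf t,s)\mapsto\bigl(\abs{n(\mathbf t,s)}_{w'}\bigr)_{w'}$ becomes an affine map whose linear part is $E$ perturbed by a rank-one term in the direction of the $\tN^a\tO^b$-words; the matrix determinant lemma shows this map has full rank. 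Its image is then a full-rank affine image of $\dN_{\geq 1}^{W'}\times\dN$, hence Zariski dense, and $Q$ must vanish identically. (Existence of $P_j$ is not at issue here, being supplied by Theorem~\ref{thm:rowland}.)

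The hard part is the separator design, and it is genuinely necessary rather than cosmetic. The naive choice of separating copies by runs of zeros fails: a copy of $w$ followed by zeros is indistinguishable from a copy of $w\tO^k$, so the contribution matrix degenerates — which merely reflects the real identities among block counts, such as $\abs{n}_{\tL}=\abs{n}_{\tO\tL}+\abs{n}_{\tL\tL}$, that the conditions defining $W$ are designed to quotient out. One must therefore cap each copy with nonzero digits to block this merging while still preventing new cross-copy occurrences, and then absorb the one unavoidable family of spurious occurrences (the words $\tN^a\tO^b$, unavoidable already for $p=2$ where only the digits $\tO$ and $\tL$ are available) by the extra parameter $s$. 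The only place that needs real care is the occurrence bookkeeping for the separator $\tN^\ell\tO^\ell$: checking that no occurrence of a $w'$ reaches across a separator (using $2\ell>\ell\geq\abs{w'}$) and that each $\tN^a\tO^b$ contributes exactly once per junction.
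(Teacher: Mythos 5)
Your construction is essentially the paper's: the proof there likewise builds test integers by concatenating copies of the relevant words with $\tN^\ell\tO^\ell$ separators, uses exactly the same confinement argument (the conditions $w_{\mu-1}\neq\tO$ and $w_0\neq\tN$ prevent any occurrence from straddling a separator), and exploits the same triangularity with respect to factor containment/length. The only real difference is the endgame: the paper fixes the number of separators in each building block, taking $v_m=(w_m\tN^\ell\tO^\ell)^{a_m}(\tN^\ell\tO^\ell)^{R-a_m}$ so that the spurious $\tN^a\tO^b$ occurrences never vary with $a_m$, and then finishes by iterated univariate interpolation along a total order refining ``is a factor of''; you instead let the separator count float and compensate with the extra parameter $s$ plus a rank and Zariski-density argument --- both routes are sound (though note that the augmented matrix $[E+\chi\mathbf{1}^T\mid\chi]$, not the matrix determinant lemma applied to $E+\chi\mathbf{1}^T$ alone, is what actually delivers full rank, since $\det(E+\chi\mathbf{1}^T)$ could in principle vanish).
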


In order to prepare for the main theorem,
we define generating functions of the values $\vartheta_p(j,n)$,
which occupy 
a central position in the statements of the main results:
\begin{equation}\label{eqn:T_def}
T_n(x) \coloneqq \sum_{j\geq 0}\vartheta_p(j,n)x^j
=\sum_{0\leq t\leq n}x^{\nu_p\binom nt} .
\end{equation}
We note that the polynomials $T_n$ are studied in the recent paper~\cite{R2017} by Rowland, where it is shown that the sequence $(T_n)_{n\geq 0}$ of polynomials is a $p$-regular sequence.
Obviously, $T_n(x)$ is a polynomial of degree $\max_{0\le t\leq n}\nu_p\binom nt$, which is sequence~\texttt{A119387} in Sloane's OEIS~\cite{Sloane} for the case $p=2$.
The recurrence~\eqref{eqn:theta_rec} for $\vartheta_p$ translates to the generating functions $T_n(x)$ as follows:
\begin{equation}\label{eqn:T_rec}
\begin{aligned}
T_a(x)&=a+1,\\
T_{pn+a}(x)&=(a+1)T_n(x)+(p-a-1)x^{\nu_p(n)+1}T_{n-1}(x),
\end{aligned}
\end{equation}
for $n\geq 1$ and $0\leq a<p$.
We note the special case
\begin{align*}
T_{cp^t-\tL}(x)=T_{(c-\tL)\tN^t}(x)=cp^t,&&&1\leq c<p,t\geq 0,
\end{align*}
which we will use later.
\begin{remark}
Using the recurrence~\eqref{eqn:T_rec}, one can show by induction that
\[\deg T_n(x)=\lambda-\nu_p(m+1)\]
for $n\geq 1$,
where $\lambda\geq 0$ and $m\in \{0,\ldots,p^\lambda-1\}$ are chosen such that $n=cp^\lambda+m$ for some $c\in\{1,\ldots,p-1\}$.
\end{remark}
Let us compute some polynomials $T_n$ for $p=2$.
From the recurrence~\eqref{eqn:T_rec},
we obtain
\begin{align*}
  T_0(x) &= 1,&
  T_1(x) &= 2,\\
  T_2(x) &= 2 + x,&
  T_3(x) &= 4,\\
  T_4(x) &= 2 + x + 2x^2,&
  T_5(x) &= 4 + 2x,\\
  T_6(x) &= 4+ 2x + x^2,&
  T_7(x) &= 8,\\
  T_8(x) &= 2 + x + 2x^2 + 4x^3,&
  T_9(x) &= 4 + 2x + 4x^2.
\end{align*}
Note that $T_n(1)=n+1$, since the $n$-th row of Pascal's triangle contains $n+1$ entries.
Moreover, we define normalized generating functions $\overline T_n$:
\[\overline T_n(x)=\frac 1{\vartheta_p(0,n)}T_n(x).\]
By definition, we have $\left[x^0\right]\overline T_n(x)=1$.
We are extending these notations to finite words $v$ in $\{\tO,\ldots,\tN\}$ via the base-$p$ expansion: if $(v)_p=n$, we set $T_v \coloneqq T_n$ and $\overline T_v\coloneqq \overline T_n$.
Based on the polynomials $\overline T_n(x)$,
we shall define the rational functions $r_w$ occurring in the main theorem.
In order to do so, we define the \emph{left truncation} $w_L$ and the \emph{right truncation} $w_R$ on the set
$\widetilde W\cup\{\varepsilon\}$, as follows.
For $w\in \widetilde W$, $r\geq 0$, and digits $c\neq \tO$ and $a$, let
\begin{alignat*}{4}
\varepsilon_L&=\varepsilon,&\quad
(c\tO^r)_L&=\varepsilon,&\quad
(c\tO^rw)_L&=w;\\
\varepsilon_R&=\varepsilon,&\quad
c_R&=\varepsilon,&\quad
(wa)_R&=w.
\end{alignat*}
In other words, for $w\in \widetilde W$ the word $w_L$ is the longest proper suffix $u$ of $w$ 
such that $u\in \widetilde W\cup\{\varepsilon\}$.
Analogously, $w_R$ is the longest proper prefix $u$ of $w$ such that $u\in \widetilde W\cup\{\varepsilon\}$.
Note that we have $(w_L)_R=(w_R)_L$ for all $w\in \widetilde W\cup\{\varepsilon\}$;
we write $w_{LR}$ for the common value.
In what follows,
we write $\overline T_w \equiv \overline T_w(x)$ as a shorthand.
The following proposition,
a telescoping product,
is the first out of two pillars on which the main theorem rests.
\begin{proposition}\label{prp:telescope}
Let $v\in \widetilde W\cup \{\varepsilon\}$.
Then we have the identity
\begin{equation}\label{eqn:telescope}
\overline T_v=\prod_{w\in \widetilde W}\biggl(\frac{\overline T_w\overline T_{w_{LR} } }{\overline T_{w_R}\overline T_{w_L}}\biggr)^{\abs{v}_w}.
\end{equation}
\end{proposition}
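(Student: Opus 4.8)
The plan is to take logarithms and reduce the claimed product to an additive telescoping identity. Since $\overline T_w$ is a constant multiple of the polynomial $T_w$ with $\overline T_w(0)=1$, and $\overline T_\varepsilon=\overline T_0=1$, each $\log\overline T_w$ is a well-defined formal power series in $x$, and for a fixed $v$ only finitely many exponents $\abs{v}_w$ are nonzero, so the product is a finite one. Writing $L(w)\coloneqq\log\overline T_w$ (so that $L(\varepsilon)=0$), it then suffices to prove
\[
L(v)=\sum_{w\in\widetilde W}\abs{v}_w\,g(w),\qquad g(w)\coloneqq L(w)+L(w_{LR})-L(w_R)-L(w_L).
\]
I would emphasize that this identity uses nothing about the numbers $T_n$ beyond $\overline T_\varepsilon=1$: it is a purely formal telescoping statement valid for any family indexed by $\widetilde W\cup\{\varepsilon\}$ whose value at $\varepsilon$ is trivial. (In particular, this reproves existence of $P_j$ independently of Carlitz' recurrence.)

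The first step is to reinterpret the right-hand side as a sum over factor occurrences. Write $v=v_{\ell-1}\cdots v_0$ with $v_{\ell-1}\neq\tO$, the case $v=\varepsilon$ being the empty product. Each occurrence of a word $w\in\widetilde W$ in $v$ corresponds to a \emph{window} $[i,j]$ with $0\le i\le j\le \ell-1$ and $v_j\neq\tO$, namely $w=v_jv_{j-1}\cdots v_i$; conversely every such window gives a word of $\widetilde W$, since its leading digit $v_j$ is nonzero. Hence $\sum_{w}\abs{v}_w\,g(w)=\sum_{[i,j]}g(v_j\cdots v_i)$, summed over all windows.

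The heart of the argument is a double telescoping. First set $D(w)\coloneqq L(w)-L(w_R)$; using $(w_L)_R=w_{LR}$ one checks immediately that $g(w)=D(w)-D(w_L)$. Now fix the right endpoint $i$ and sum over the admissible left endpoints $j$. Since $w_R$ strips the least significant digit, the left truncation $(v_j\cdots v_i)_L$ is precisely the window with the same right endpoint $i$ whose left endpoint is the \emph{next nonzero position below $j$} (and equals $\varepsilon$, with $D(\varepsilon)=0$, if no such position exists). Listing the nonzero positions $\ge i$ in increasing order, the inner sum $\sum_j\bigl(D(v_j\cdots v_i)-D((v_j\cdots v_i)_L)\bigr)$ telescopes to $D$ of the widest window with right endpoint $i$; because $v_{\ell-1}\neq\tO$, that widest window is always the full prefix $v_{\ell-1}\cdots v_i$.

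Finally, summing the surviving terms over $i$ and using $D(v_{\ell-1}\cdots v_i)=L(v_{\ell-1}\cdots v_i)-L(v_{\ell-1}\cdots v_{i+1})$, the outer sum $\sum_{i=0}^{\ell-1}D(v_{\ell-1}\cdots v_i)$ telescopes to $L(v_{\ell-1}\cdots v_0)-L(\varepsilon)=L(v)$, which is the required identity. The step I expect to need the most care is the inner telescoping: the left truncation $w_L$ jumps over maximal runs of zeros, so one must bookkeep nonzero digit positions carefully, and the collapse of the inner sum to the full prefix relies decisively on $v$ having no leading zeros, i.e. on the hypothesis $v\in\widetilde W\cup\{\varepsilon\}$.
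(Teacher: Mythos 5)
Your argument is correct and is in essence the paper's own proof: both rest on the observation that occurrences of words of $\widetilde W$ in $v$ are exactly the windows with nonzero leading digit, on the identity $(w_L)_R=(w_R)_L$, and on $\overline T_\varepsilon=1$, and both collapse the product by telescoping over the two endpoints of a window. The only difference is organizational: the paper runs an induction on the length of $v$ (grouping windows by their left endpoint, i.e.\ by which prefix they are, and telescoping over right truncations at each step), whereas you telescope directly in the transposed order (inner sum over left truncations for fixed right endpoint, outer sum over right truncations), which is the same computation without the induction.
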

We note that we do not use the explicit definition of $\overline T_w$ in the proof of this proposition.
We only need the property $\overline T_w(0)=1$, so that we may take quotients.
In particular, we will show that the product reduces to the fraction $\overline T_v/\overline T_\varepsilon$ by cancelling identical factors.
The following example clarifies this point.
\begin{example}
Let $p=2$ and $v=\tL\tO\tO\tL\tO$.
Then we have
\[\frac{\overline T_v}{\overline T_\varepsilon}=
\biggl(\frac{\overline T_{\tL}\overline T_\varepsilon}{\overline T_\varepsilon\overline T_\varepsilon}\biggr)^2
\biggl(\frac{\overline T_{\tL\tO}\overline T_\varepsilon}{\overline T_\tL\overline T_\varepsilon}\biggr)^2
\biggl(\frac{\overline T_{\tL\tO\tO}\overline T_\varepsilon}{\overline T_{\tL\tO}\overline T_\varepsilon} \biggr)
\biggl(\frac{\overline T_{\tL\tO\tO\tL}\overline T_\varepsilon}{\overline T_{\tL\tO\tO}\overline T_\tL} \biggr)
\biggl(\frac{\overline T_{\tL\tO\tO\tL\tO}\overline T_\tL}{\overline T_{\tL\tO\tO\tL}\overline T_{\tL\tO} } \biggr).
\]
\end{example}
For each word $w\in\widetilde W$ we can finally define the rational generating function
\begin{equation}\label{eqn:rw}
r_w(x)\coloneqq\frac{\overline T_w(x)\overline T_{w_{LR}}(x) }{\overline T_{w_R}(x)\overline T_{w_L}(x)}.
\end{equation}
We note that $r_w(x)=1$ for $w\in \widetilde W\setminus W$, which follows from the facts that $\overline T_a=1$ for $a\in\{\tL,\ldots,\tN\}$ and $\overline T_{v\tN}=\overline T_v$ for $v\in\widetilde W$, see~\eqref{eqn:T_rec}.
Now that we know $r_w$,
our main theorem can be stated completely explicitly.
\begin{theorem}\label{thm:main}
Let $w^{(1)},\ldots,w^{(\ell)}$ be admissible words and
$k_1,\ldots,k_{\ell}$ positive integers.
Assume that $c_j$ is the coefficient of the monomial
\[X_{w^{(1)}}^{k_1}\cdots X_{w^{(\ell)}}^{k_{\ell}}\]
in the polynomial $P_j$.
Then
\[
\sum_{j\geq 0}c_jx^j
=
\frac 1{k_1!}\bigl(\log r_{w^{(1)}}(x)\bigr)^{k_1}\cdots
\frac 1{k_{\ell}!}
\bigl(\log r_{w^{(\ell)}}(x) \bigr)^{k_{\ell}}.
\]
\end{theorem}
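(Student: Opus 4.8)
The plan is to combine the telescoping identity of Proposition~\ref{prp:telescope} with the definition of the normalised generating functions, and then to pass to logarithms. First, for every $n$, let $v\in\widetilde W\cup\{\varepsilon\}$ be the base-$p$ expansion of $n$; discarding from identity~\eqref{eqn:telescope} the factors indexed by $w\in\widetilde W\setminus W$, for which $r_w\equiv 1$, yields the master identity
\[
\overline T_n(x)=\prod_{w\in W}r_w(x)^{\abs{n}_w}.
\]
By the very definition of $\overline T_n$ the left-hand side equals $\sum_{j\geq 0}\bigl(\vartheta_p(j,n)/\vartheta_p(0,n)\bigr)x^j$, a series with constant term $1$; since also $r_w(0)=1$, I may take formal logarithms in $\dC[[x]]$ and so turn the product into the sum
\[
\log\Bigl(\sum_{j\geq 0}\frac{\vartheta_p(j,n)}{\vartheta_p(0,n)}x^j\Bigr)=\sum_{w\in W}\abs{n}_w\log r_w(x).
\]

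The next step is to lift this family of identities, indexed by the integers $n$, to a single formal identity in independent variables $X_w$. In the ring $\dC[X_w:w\in W][[x]]$ put
\[
G(x)=\exp\Bigl(\sum_{w\in W}X_w\log r_w(x)\Bigr)=\prod_{w\in W}\exp\bigl(X_w\log r_w(x)\bigr).
\]
Granting for the moment that each coefficient $[x^j]G$ is a genuine polynomial in finitely many of the $X_w$, the substitution $X_w\mapsto\abs{n}_w$ is well defined and, by the previous paragraph, carries $G(x)$ to $\sum_j\bigl(\vartheta_p(j,n)/\vartheta_p(0,n)\bigr)x^j$. Thus $[x^j]G$ is a polynomial in the variables $X_w$, $w\in W$, representing $\vartheta_p(j,n)/\vartheta_p(0,n)$ for every $n$; by the uniqueness statement of Proposition~\ref{prp:unique_polynomial} it must coincide with the polynomial $P_j$ of Theorem~\ref{thm:rowland} (so that, incidentally, the argument reproves the existence of $P_j$). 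Hence $\sum_{j\geq 0}P_jx^j=G(x)$. Reading off the coefficient of the monomial $X_{w^{(1)}}^{k_1}\cdots X_{w^{(\ell)}}^{k_\ell}$ on the product side of $G$ selects, from the factor indexed by $w=w^{(i)}$, the order-$k_i$ term of $\exp(X_w\log r_w)=\sum_{k\geq 0}X_w^k(\log r_w)^k/k!$, and the constant term from every other factor; this gives $\prod_{i=1}^\ell(\log r_{w^{(i)}})^{k_i}/k_i!$. Since the corresponding coefficient of $\sum_jP_jx^j$ is by definition $\sum_{j\geq 0}c_jx^j$, the claimed formula follows.

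The step I expect to require the most care is the parenthetical assumption above: that $G$ really lies in $\dC[X_w][[x]]$ with polynomial coefficients, equivalently that only finitely many monomials in the $X_w$ contribute to each $[x^j]G$. This rests on the order estimate $\log r_w(x)=\LandauO\bigl(x^{\abs{w}-1}\bigr)$, i.e.\ $r_w(x)=1+\LandauO\bigl(x^{\abs{w}-1}\bigr)$: a product $\prod_i(\log r_{w^{(i)}})^{k_i}$ can then reach the power $x^j$ only when $\sum_i k_i(\abs{w^{(i)}}-1)\leq j$, which bounds both the number and the lengths of the words involved and also guarantees $x$-adic convergence of the infinite product defining $G$. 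I would derive this bound from the recurrence for $r_w$ developed in this section --- it is already visible in $r_{\tL\tO}=1+x/2$ and in $r_{\tL\tO\tL\tO}-1=\tfrac12x^3/(1+x/2)^2$ --- and it dovetails with the constraint~\eqref{eqn:RBG_implicit}, which forces the single variable $X_w$ to be absent from $P_j$ whenever $j<\abs{w}-1$. Once this finiteness is secured, the substitution, the appeal to uniqueness, and the coefficient extraction above are all purely formal.
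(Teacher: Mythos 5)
Your proposal is correct and follows essentially the same route as the paper: the telescoping identity of Proposition~\ref{prp:telescope}, discarding the factors with $r_w\equiv 1$, the exp--log decomposition with finiteness secured by the order bound $r_w(x)=1+\LandauO\bigl(x^{\abs{w}-1}\bigr)$ from Proposition~\ref{prp:r_numerator}, and finally the uniqueness statement of Proposition~\ref{prp:unique_polynomial}. Packaging the argument as a single generating function $G(x)=\sum_j P_j x^j$ in $\dC[X_w:w\in W][[x]]$ rather than extracting each $[x^j]$ separately is only a cosmetic difference.
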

We list the first few rational functions $r_w$ for the case $p=2$ and admissible $w$:
\begin{align*}
  r_{\tL\tO}(x) &= 1+\tfrac 12 x,&
  r_{\tL\tO\tO}(x) &= 1+\frac{x^2}{1+x/2},\\
  r_{\tL\tL\tO}(x) &= 1+\frac{\frac 14 x^2}{1+x/2},&
  r_{\tL\tO\tO\tO}(x) &= 1+\frac{2x^3}{1+x/2+x^2},\\
  r_{\tL\tO\tL\tO}(x) &= 1+\frac{\frac 12x^3}{(1+x/2)^2},&
  r_{\tL\tL\tO\tO}(x) &= 1+\frac{\frac 12x^3}{(1+x/2+x^2)(1+x/2+x^2/4)}.
\end{align*}
As a straightforward application of Theorem~\ref{thm:main}
we obtain the corollary from the introduction, which we restate here.
\begin{corollary}\label{cor:log32}
Let $p=2$.
The coefficient of $X_{\tL\tO}$ in the polynomial $P_j$ equals
$\left[x^j\right]\log(1+x/2)$.
In particular,
\[\sum_{j\geq 0}c_j=\log(3/2).\]
\end{corollary}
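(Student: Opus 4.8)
The plan is to deduce the corollary as the most elementary special case of Theorem~\ref{thm:main}. First I would specialize to $\ell=1$, $w^{(1)}=\tL\tO$, and $k_1=1$, so that the monomial under consideration is just $X_{\tL\tO}$. Since $\tL\tO$ is admissible, the theorem applies directly and gives
\[
\sum_{j\geq 0}c_jx^j=\frac{1}{1!}\bigl(\log r_{\tL\tO}(x)\bigr)^1=\log r_{\tL\tO}(x).
\]

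Next I would substitute the explicit value $r_{\tL\tO}(x)=1+x/2$. This is listed among the sample rational functions after Theorem~\ref{thm:main}, and it also follows in one line from the definition~\eqref{eqn:rw}: here $(\tL\tO)_L=(\tL\tO)_{LR}=\varepsilon$ and $(\tL\tO)_R=\tL$, so that $r_{\tL\tO}=\overline T_{\tL\tO}\,\overline T_\varepsilon/(\overline T_\tL\,\overline T_\varepsilon)$, and by~\eqref{eqn:T_rec} together with~\eqref{eqn:fine} we have $\overline T_\tL=\overline T_\varepsilon=1$ and $\overline T_{\tL\tO}=T_2/\vartheta_2(0,2)=(2+x)/2$. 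This yields $\sum_{j\geq 0}c_jx^j=\log(1+x/2)$, which is exactly the claim $c_j=[x^j]\log(1+x/2)$.

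For the final assertion I would set $x=1$ in this identity. The only step requiring a comment is the legitimacy of this substitution: the power series of $\log(1+x/2)$ has its unique singularity at $x=-2$, hence radius of convergence $2$, so the series $\sum_{j\geq 0}c_jx^j$ converges absolutely at $x=1$, and by continuity of its sum on the open disc of convergence we obtain $\sum_{j\geq 0}c_j=\log(1+1/2)=\log(3/2)$. Since Theorem~\ref{thm:main} carries the entire argument, there is no genuine obstacle to overcome; the proof is a direct substitution, and the only points to check---the value of $r_{\tL\tO}$ and the convergence at $x=1$---are both routine.
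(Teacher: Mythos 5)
Your proposal is correct and follows essentially the same route as the paper: the paper's proof is likewise a direct specialization of Theorem~\ref{thm:main} to the monomial $X_{\tL\tO}$, using $r_{\tL\tO}(x)=\overline T_2(x)=1+x/2$ and the fact that this function has no singularity or zero in the closed unit disc to justify evaluating at $x=1$. Your extra verification of the truncations $(\tL\tO)_L$, $(\tL\tO)_R$ and of the radius of convergence is a correct unpacking of the same argument.
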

\begin{proof}
  In this simple case all we need is $r_{\tL\tO}(x) = \overline T_2(x) = 1 + \frac{x}{2}$,
which does not have a singularity or a zero in the closed unit disc.
\end{proof}
\begin{remark}
The first step in finding our main theorem was to investigate the case $X_{\tL\tO}$.
From the first values $0,1/2,-1/8,1/24,-1/64,1/160$ it can be guessed easily that the corresponding generating function is $\log(1+x/2)$.
More generally, by considering integers $n(a)$ whose binary expansion is built of blocks as in the proof of Proposition~\ref{prp:unique_polynomial},
we obtained the conjecture that the generating function for $X_w$ is given by $\log \circ\,r_w$, where $r_w$ is given by~\eqref{eqn:rw}.
Finally we observed experimentally, using again the data obtained by Rowland's package,
that the generating function for $\mathfrak m_1\mathfrak m_2$ (where $\mathfrak m_1$ and $\mathfrak m_2$ are monomials) seems to be obtained by multiplying the generating functions for $\mathfrak m_1$ and $\mathfrak m_2$,
and some factor taking care of multiplicities.
This led us to the formulation of Theorem~\ref{thm:main}.
\end{remark}
We continued the computation of the rational functions $r_w$
and performed analogous experiments for the prime numbers $3,5,7$
in order to obtain a conjecture on the structure of $r_w$.
The statement of the following proposition is the result of these experiments
and constitutes the second main ingredient in the proof of our main theorem.
The proof can be found at the end of this paper.
\begin{proposition}\label{prp:r_numerator}
Let $p$ be a prime and assume that $w=w_{\mu-1}\cdots w_0\in W$.
The rational function $r_w(x)$ satisfies
\[r_w(x)=1+\frac{\alpha x^{\mu-1}}{\overline T_{w_L}(x)\overline T_{w_R}(x)},\]
where
\begin{equation}\label{eqn:alpha_def}
\alpha=
p^{\mu-2}
\frac{w_{\mu-1}}{w_{\mu-1}+1}
\cdot
\frac{p-w_0-1}{w_0+1}
\prod_{2\leq d\leq p}
d^{-2\lvert w'\rvert_{d-\tL}}
,
\end{equation}
and $w'=w_{\mu-2}\cdots w_1$.
\end{proposition}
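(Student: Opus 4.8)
The plan is to reduce the claim to a single polynomial identity and then prove it by a two-level induction. Clearing the denominator in the definition~\eqref{eqn:rw} of $r_w$ (legitimate, since every $\overline T$ has constant term $1$ and is therefore invertible), the assertion $r_w(x)=1+\alpha x^{\mu-1}/(\overline T_{w_L}\overline T_{w_R})$ is equivalent to
\[
\overline T_w\,\overline T_{w_{LR}}-\overline T_{w_R}\,\overline T_{w_L}=\alpha\,x^{\mu-1},
\]
which I will call the \emph{target identity}. The tool throughout is the normalized recurrence for $\overline T$, obtained by dividing~\eqref{eqn:T_rec} by $\vartheta_p(0,pn+a)=(a+1)\vartheta_p(0,n)$ and using $\vartheta_p(0,n-1)/\vartheta_p(0,n)=\tfrac{c}{c+1}p^{\nu_p(n)}$, where $c$ is the lowest nonzero digit of $n$; explicitly,
\[
\overline T_{pn+a}=\overline T_n+\frac{p-a-1}{a+1}\cdot\frac{c}{c+1}\,p^{\nu_p(n)}x^{\nu_p(n)+1}\,\overline T_{n-1}.
\]

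I would run the outer induction on $\mu=\abs{w}$, writing $w=c\tO^r w_L$ with $c=w_{\mu-1}\neq\tO$. In the base case $w_L=\varepsilon$, i.e. $w=c\tO^{\mu-2}w_0$ and $w_R=c\tO^{\mu-2}$, one peels the last digit $w_0$ and uses the special value $T_{cp^{\mu-2}-\tL}=cp^{\mu-2}$ (so $\overline T_{cp^{\mu-2}-\tL}=1$) to check the target identity directly, the product in~\eqref{eqn:alpha_def} being empty. For the inductive step $w_{LR}\neq\varepsilon$, I peel $w_0$ simultaneously from $\overline T_w$ and from $\overline T_{w_L}$. Because $w_R=c\tO^r w_{LR}$ ends in the digits of $w_{LR}$, the integers underlying $w_R$ and $w_{LR}$ share the same lowest nonzero digit and the same $p$-adic valuation; hence the two correction terms carry a common factor $\beta$, and the left-hand side of the target identity collapses to $\beta\cdot A(w_{LR})$, where for a word $Y$ I set
\[
A(Y)\coloneqq\overline T_{c\tO^r\widetilde Y}\,\overline T_Y-\overline T_{c\tO^r Y}\,\overline T_{\widetilde Y},
\]
and $\widetilde Y$ denotes the base-$p$ word of $(Y)_p-1$.

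The heart of the argument is a secondary induction evaluating $A(Y)$, descending on $\abs{Y}$. If $Y$ ends in $\tO$, peeling this zero from the two factors ending in $\tO$ (and using $\overline T_{u\tN}=\overline T_u$ on the two factors ending in $\tN$) makes the cross terms cancel, giving $A(Y)=A\bigl((Y)_R\bigr)$ with no extra factor. If $Y=P'c_0$ ends in a nonzero digit $c_0$, peeling $c_0$ and using the elementary identity $\tfrac{p-c_0-1}{c_0+1}-\tfrac{p-c_0}{c_0}=-\tfrac{p}{c_0(c_0+1)}$ yields the self-similar relation
\[
A(P'c_0)=\frac{p}{c_0(c_0+1)}\cdot\frac{c'}{c'+1}\,p^{\nu_p((P')_p)}x^{\nu_p((P')_p)+1}\,A(P'),
\]
with $c'$ the lowest nonzero digit of $P'$; the base case $Y=e$ (a single nonzero digit) evaluates to $A(e)=\tfrac{c}{c+1}\,p^{r+1}x^{r+1}/\bigl(e(e+1)\bigr)$.

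Finally I would assemble the pieces. Since $w'=\tO^r w_{LR}$ has the same nonzero digits as $w_{LR}$, the zero-peels contribute nothing, while each nonzero-digit peel of value $c_0=d-\tL$ contributes exactly the factor $(c_0+1)^{-2}=d^{-2}$; multiplying these over the descent produces precisely $\prod_{2\le d\le p}d^{-2\abs{w'}_{d-1}}$, and the accumulated powers of $p$ and of $x$ telescope, so that $\beta\cdot A(w_{LR})=\alpha x^{\mu-1}$ with $\alpha$ as in~\eqref{eqn:alpha_def}. The hard part will be precisely this bookkeeping: spotting the right auxiliary quantity $A(Y)$, verifying that its two reduction steps are genuinely factor-free respectively self-similar, and confirming that the rational prefactors $\tfrac{c}{c+1}$ and $\tfrac{1}{c_0(c_0+1)}$ accumulate to the \emph{exact} constant $\alpha$, and not merely to the correct power of $x$.
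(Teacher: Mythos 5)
Your proposal is correct: every identity you state checks out against the recurrence~\eqref{eqn:T_rec}, and the final bookkeeping does telescope to the constant $\alpha$ of~\eqref{eqn:alpha_def} (the powers of $p$ accumulate to $p^{\mu-2}$, the powers of $x$ to $x^{\mu-1}$, and each nonzero digit $g$ of $w'$ contributes $\frac{g}{g+1}\cdot\frac{1}{g(g+1)}=(g+1)^{-2}$, which is exactly the product over $d$). The organization is genuinely different from the paper's, although both arguments rest on the same two ingredients: the recurrence~\eqref{eqn:T_rec} and the cancellation of cross terms after peeling the lowest digit. The paper inducts on the \emph{right depth} of $w$ and keeps returning to the main quantity $T_vT_{v_{LR}}-T_{v_L}T_{v_R}$ for shorter words $v$, using the two auxiliary formulas~\eqref{eqn:LtoLR} and~\eqref{eqn:LtoLR2} to pass back and forth between that quantity and your $A$-type expression $T_{v-\tL}T_{v_L}-T_{v_L-\tL}T_v$; the constant $\alpha$ is thereby verified one multiplicative factor per induction step. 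You instead perform a single conversion $\overline T_w\overline T_{w_{LR}}-\overline T_{w_L}\overline T_{w_R}=\beta\,A(w_{LR})$ and then evaluate $A$ in closed form by its own self-contained descent ($A(Z\tO)=A(Z)$ and the self-similar relation for a nonzero last digit). This buys a transparent explanation of where the factor $\prod_{2\le d\le p}d^{-2\abs{w'}_{d-\tL}}$ comes from --- one factor $(c_0+1)^{-2}$ per nonzero digit peeled --- at the cost of carrying the full accumulated prefactor to the very end rather than checking one factor at a time. Note also that your ``outer induction on $\mu$'' is never actually invoked: the single reduction plus the $A$-descent already evaluates the left-hand side completely.

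Two small repairs. First, the base case is mislabelled: $w_L=\varepsilon$ forces $w=c\tO^{\mu-1}$, whereas the case you need (and in fact treat, via your clause $w=c\tO^{\mu-2}w_0$) is $w_{LR}=\varepsilon$; with that correction the dichotomy against the inductive step $w_{LR}\neq\varepsilon$ is exhaustive, and only in the latter case do $w_R$ and $w_{LR}$ share the same lowest nonzero digit and $p$-adic valuation, as your reduction requires. Second, in the descent for $A$ you should record that every intermediate word still contains the leading nonzero digit of $w_{LR}$, so its underlying integer is positive and the subtraction defining $\widetilde Y$ and the step involving $T_{n-1}$ are always legitimate.
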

\begin{remark}
Consider the special case $w=ca$ of this proposition.
We obtain $\alpha=\frac{c}{c+1}\frac{p-a-1}{a+1}$,
which gives the formula $\overline T_{ca}(x)=r_{ca}(x)=1+\frac{c}{c+1}\frac{p-a-1}{a+1}x$
(this also follows directly from the recurrence~\eqref{eqn:T_rec}).
By Theorem~\ref{thm:main} we obtain the coefficient of $X_{ca}$ in the polynomial $P_1$ by extracting the coefficient
\[
\left[x^1\right]\log\biggl(1+\frac{c}{c+1}\frac{p-a-1}{a+1}x\biggr)=\frac{c}{c+1}\frac{p-a-1}{a+1},
\]
which is consistent with~\eqref{eqn:carlitz_alt2}.
\end{remark}
The proof of Theorem~\ref{thm:main} is a combination of Propositions~\ref{prp:telescope} and~\ref{prp:r_numerator}, and consists of a series of identities.
\begin{proof}[Proof of Theorem~\ref{thm:main}]
By Proposition~\eqref{prp:telescope},
and the definition $[x^j] \overline T_n(x) = \vartheta_p(j,n)/\vartheta_p(0,n)$, we have
\[
\left[x^j\right]\prod_{w\in \widetilde W}r_w(x)^{\abs{n}_w}
=\frac{\vartheta_p(j,n)}{\vartheta_p(0,n)}
\quad
=P_j\Bigl(\bigl(\abs{n}_w\bigr)_{w\in W_j} \Bigr)
\]
for all $n\in\dN$.

Since $r_w(x)=1$ for $w=v\tN$ and $r_a(x)=1$ for $a\in\{\tL,\ldots,\tN\}$, words $w\in \widetilde W\setminus W$ do not contribute to the left hand side.
Moreover, 
Proposition~\ref{prp:r_numerator} implies that words $w\in W\setminus W_j$
do not contribute, since $\lvert w\rvert \geq j+2$
for these words and therefore $r_w(x)=1+\LandauO\bigl(x^{j+1}\bigr)$.
Let us reveal how the polynomial structure emerges in the left hand side.
The idea is to apply an exp-log decomposition on~\eqref{eqn:telescope}.
This is legitimate, as the constant term of $\overline T_n(x)$
and therefore of $r_w(x)$ is $1$, compare~\eqref{eqn:T_def}.
We have the identities
\begin{align*}
  \left[x^j\right]
  \prod_{w\in \widetilde W}r_w(x)^{\abs{n}_w}
  &=\left[x^j\right]
  \prod_{w\in W_j}r_w(x)^{\abs{n}_w}\\
  &=\left[x^j\right]\prod_{w \in W_j} \exp\bigl(  \abs{n}_w \log  r_w(x) \bigr)\\
  &=\left[x^j\right]\prod_{w \in W_j} \sum_{k \geq 0}
     \abs{n}_w^k \frac{\bigl(\log  r_w(x) \bigr)^k}{k!}\\
  &= \sum_{\substack{k_w\geq 0\\w\in W_j}}
     \Biggl(\bigl[x^j\bigr]\prod_{w\in W_j}\frac{\bigl(\log  r_w(x) \bigr)^{k_w}}{k_w!}\Biggr)\prod_{w\in W_j}\abs{n}_w^{k_w},
\end{align*}
where the last step is justified since there are only finitely many summands contributing to the $j$-th coefficient.
(This is the case by the condition $r_w(0)=1$,
which implies $\log r_w(x) = \LandauO(x)$ for $x \to 0$).

The right hand side is a polynomial in $\abs{n}_w$ for $w\in W$,
and by the uniqueness result (Proposition~\ref{prp:unique_polynomial})
the theorem is proved.
\end{proof}
Note that the argument given in the proof also gives a new proof of existence of the polynomials $P_j$.
\begin{remark}\label{rem:110}
By Proposition~\ref{prp:r_numerator} we can determine exactly for which $j$ a given monomial occurs first.
Since $\overline T_w(0)=1$ for all admissible words $w$,
we have $r_w(x)=1+\alpha x^k+\LandauO\bigl(x^{k+1}\bigr)$, where $\alpha$ is given by~\eqref{eqn:alpha_def} and $k=\abs{w}-1$,
therefore $\log r_w(x)=\alpha x^k+\LandauO(x^{k+1})$.
By Theorem~\ref{thm:main} the monomial $X_w$ occurs first in the polynomial $P_j$, where $j=\abs{w}-1$.
More generally, the monomial $X_{w^{(1)}}\cdots X_{w^{(\ell)}}$
(repetitions allowed) occurs first in $P_j$,
where $j=\abs{w^{(1)}}+\cdots+\abs{w^{(\ell)}}-\ell$.
That is, the lower bound for the first occurrence of a monomial given by~\eqref{eqn:RBG_implicit} is sharp.

We note that this observation is not sufficient to determine the number of terms in $P_j$;
in the generating function appearing in Theorem~\ref{thm:main} some higher coefficients may vanish.
This is for example the case for $w=\tL\tL\tO$. We have
\[\log r_{\tL\tL\tO}(x)
=
\log\biggl(\frac{1-(x/2)^3}{1-(x/2)^2}\biggr)=
\sum_{i\geq 1}\frac{x^{2i}}{i4^i}
-\sum_{i\geq 1}\frac{x^{3i}}{i8^i},
\]
and consequently the monomial $X_{\tL\tL\tO}$ does not occur in $P_j$ for $j=6\ell\pm 1$, where $\ell\geq 1$.
It is however true that each nontrivial monomial occurs in infinitely many $P_j$.
\end{remark}
\begin{corollary}\label{cor:inf}
Each monomial $X_{w^{(1)}}^{k_1}\cdots X_{w^{(\ell)}}^{k_{\ell}}$
except for the constant term~$1$ occurs in infinitely many $P_j$.
\end{corollary}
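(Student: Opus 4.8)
The plan is to read off the generating function supplied by Theorem~\ref{thm:main}. Writing $c_j$ for the coefficient of the given monomial in $P_j$, we set
\[
G(x)\coloneqq\sum_{j\geq 0}c_jx^j=\frac 1{k_1!}\bigl(\log r_{w^{(1)}}(x)\bigr)^{k_1}\cdots\frac 1{k_{\ell}!}\bigl(\log r_{w^{(\ell)}}(x)\bigr)^{k_{\ell}}.
\]
The monomial occurs in $P_j$ exactly when $c_j\neq 0$, so the corollary is equivalent to the statement that $G$ has infinitely many nonzero Taylor coefficients, i.e.\ that $G$ is not a polynomial. Since the monomial is nontrivial we have $\ell\geq 1$, each $w^{(i)}\in W$ is admissible, and each $k_i\geq 1$. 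I would prove that $G$ has a genuine singularity at a finite point, which forces its radius of convergence to be finite and hence rules out $G$ being a polynomial.

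First I would record that for admissible $w\in W$ the rational function $r_w$ is nonconstant. By Proposition~\ref{prp:r_numerator} we have $r_w(x)=1+\alpha x^{\mu-1}/\bigl(\overline T_{w_L}(x)\overline T_{w_R}(x)\bigr)$ with $\alpha\neq 0$, since in~\eqref{eqn:alpha_def} the factor $w_{\mu-1}/(w_{\mu-1}+1)$ is nonzero ($w_{\mu-1}\neq\tO$), the factor $(p-w_0-1)/(w_0+1)$ is nonzero ($w_0\neq\tN$), and the remaining product is positive. A nonconstant rational function normalised by $r_w(0)=1$ has a zero or a pole at some finite point $x_0\neq 0$; on the disc bounded by its zero or pole of smallest modulus the branch of $\log r_w$ with $\log r_w(0)=0$ is analytic, and $\log r_w$ acquires a logarithmic branch point at that nearest zero or pole.

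The local analysis is the heart of the argument. Choose $x_0\neq 0$ of smallest modulus among all zeros and poles of all the $r_{w^{(i)}}$. On $\{\lvert x\rvert<\lvert x_0\rvert\}$ every $\log r_{w^{(i)}}$ is analytic, so $G$ is analytic there and its radius of convergence is at least $\lvert x_0\rvert$. Put $\tau=x-x_0$. For the indices $i$ in the (nonempty) set $S$ for which $r_{w^{(i)}}$ has a zero or pole at $x_0$ we have $\log r_{w^{(i)}}=n_i\log\tau+(\text{analytic})$ with $n_i\in\dZ\setminus\{0\}$; for the remaining indices $\log r_{w^{(i)}}$ is analytic at $x_0$, vanishing there to some order $d_i\geq 0$. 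Multiplying the local expansions gives $G(x)\sim C\,\tau^{D}(\log\tau)^{K}$ as $\tau\to 0$, where $K=\sum_{i\in S}k_i\geq 1$, $D=\sum_{i\notin S}d_ik_i\geq 0$, and $C\neq 0$. Because $K\geq 1$, the factor $(\log\tau)^{K}$ has nontrivial monodromy around $x_0$ (a nonconstant polynomial in $\log\tau$ is not invariant under $\log\tau\mapsto\log\tau+2\pi i$), while the prefactor $C\tau^{D}$ is single-valued; hence $G$ cannot be continued to a single-valued analytic function at $x_0$. Thus $x_0$ is a true singularity and the radius of convergence of $G$ equals $\lvert x_0\rvert<\infty$.

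Finally, a polynomial has infinite radius of convergence, so $G$ is not a polynomial and therefore has infinitely many nonzero coefficients $c_j$; equivalently, the monomial occurs in infinitely many $P_j$. The one delicate point, which is the main obstacle, is to check that the logarithmic branch point genuinely survives in the product rather than being cancelled: the factors with $\log r_{w^{(i)}}(x_0)=0$ only contribute a single-valued power $\tau^{d_ik_i}$, and since $\tau^{D}(\log\tau)^{K}$ with $K\geq 1$ remains non-analytic (indeed multivalued) at $x_0$, no cancellation of the singularity can occur. This is exactly the behaviour already visible in Remark~\ref{rem:110}, where $\log r_{\tL\tL\tO}$ retains its singularities and hence yields infinitely many nonzero coefficients despite the infinitely many vanishing ones.
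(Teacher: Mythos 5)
Your proof is correct and follows essentially the same route as the paper: both reduce the claim via Theorem~\ref{thm:main} to showing that $\prod_i\bigl(\log r_{w^{(i)}}(x)\bigr)^{k_i}$ is not a polynomial, and both do so by locating a finite singularity (coming from the nearest zero or pole of the $r_{w^{(i)}}$, which exists since Proposition~\ref{prp:r_numerator} forces $r_w\not\equiv 1$) and arguing that it cannot be cancelled in the product. The only difference is cosmetic: you rule out cancellation by a local monodromy computation around $x_0$, while the paper uses the observation that $\log r_{w^{(i)}}$ vanishes only at $x=0$; your version is a bit more detailed but not a different argument.
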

\begin{proof}
By Theorem~\ref{thm:main} the claim is equivalent to the statement that the power series $\prod_{i=1}^{\ell}\bigl(\log r_{w^{(i)}}(x)\bigr)^{k_i}$ is not a polynomial.
We will analyse the possible singularities,
which will contradict a polynomial behaviour.

Assume that $\rho_i$ is the radius of convergence of the power series
$\log r_{w^{(i)}}(x)$
and choose $j\in\{1,\ldots,\ell\}$ such that $\rho_j=\min_{1\leq i\leq \ell}\rho_i$,
moreover let $x_j$ be a singularity of $\log r_{w^{(j)}}(x)$ on the circle $\{x:\abs{x}=\rho_j\}$.
By Proposition~\ref{prp:r_numerator} we have $0<\rho_j<\infty$, and that the power series $\log r_{w^{(i)}}(x)$ does not have a zero apart from $x=0$.
Therefore the singularities cannot cancel,
which implies that $x_j$ is a singularity of
$\bigl(\log r_{w^{(1)}}(x)\bigr)^{k_1}\cdots \bigl(\log r_{w^{(\ell)}}(x)\bigr)^{k_{\ell}}$.
Consequently, this expression is not a polynomial.
\end{proof}
Moreover, we want to derive an asymptotic estimate on the number of terms in $P_j$, using Proposition~\ref{prp:r_numerator}.
\begin{corollary}\label{cor:number_of_terms}
The number of terms $N_j$ in the polynomial $P_j$ satisfies the bound
\[N_j\leq \left[x^j\right]\frac{1}{1-x}\exp\Biggl(\sum_{k\geq 1}\frac 1k\frac{(p-1)^2x^k}{1-px^k}\Biggr).\]
Asymptotically, for $j\rightarrow\infty$, 
this upper bound is
\begin{align*}
   \frac{e^{\mu (\sigma - 1/2)} }{2 p \mu^{1/4} \sqrt{\pi} } \frac{e^{2\sqrt{\mu j}} p^j}{j^{3/4}} \left(1 + \LandauO \left(\frac{1}{\sqrt{n}} \right)\right),
\end{align*}
with the constants $\mu = \frac{(p-1)^2}{p}$ and $\sigma = \sum_{k \geq 2} \frac{1}{k} \frac{1}{p^{k-1}-1}$.
Moreover, 
we have
\[N_j=\Theta\bigl(p^j e^{2\sqrt{\mu j}} j^{-3/4}\bigr).\]
The same estimates are true for the number $N'_j$ of terms in the polynomials $P'_j$.
\end{corollary}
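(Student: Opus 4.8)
The plan is to count the monomials of $P_j$ by identifying each monomial with the multiset of admissible words from which it is built, translating this count into a generating-function coefficient, and then extracting the asymptotics by the saddle-point method. \textbf{Reduction to a generating function.} First I would invoke Remark~\ref{rem:110}: a monomial $X_{w^{(1)}}^{k_1}\cdots X_{w^{(\ell)}}^{k_\ell}$ occurs for the first time in $P_j$ exactly for $j=\sum_{i=1}^{\ell}k_i(\lvert w^{(i)}\rvert-1)$, and by~\eqref{eqn:RBG_implicit} it can occur in $P_j$ only if this \emph{first-occurrence degree} is at most $j$. Viewing a monomial as a multiset of admissible words in which a word $w$ carries weight $\lvert w\rvert-1$, the number of monomials of first-occurrence degree equal to $j$ is $[x^j]\prod_{w\in W}(1-x^{\lvert w\rvert-1})^{-1}$. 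Since there are exactly $(p-1)^2p^{\mu-2}$ words $w\in W$ of length $\mu$ (the leftmost letter avoids $\tO$, the rightmost avoids $\tN$, and the remaining $\mu-2$ are free), grouping by length rewrites this product as $\prod_{d\ge1}(1-x^d)^{-(p-1)^2p^{d-1}}$; expanding $-\log(1-x^d)=\sum_{k\ge1}x^{dk}/k$ and summing the resulting geometric series over $d$ shows that it equals $\exp\bigl(\sum_{k\ge1}\tfrac1k\tfrac{(p-1)^2x^k}{1-px^k}\bigr)=:e^{L(x)}$. Summing over all first-occurrence degrees $\le j$ by means of the factor $1/(1-x)$ then yields the stated upper bound $N_j\le[x^j]\tfrac1{1-x}e^{L(x)}$.

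\textbf{A matching lower bound.} For the $\Theta$-estimate I need a lower bound of the same order. The decisive point, again from Remark~\ref{rem:110}, is that a monomial of first-occurrence degree \emph{exactly} $j$ has nonzero coefficient in $P_j$; all such monomials are therefore counted by $N_j$, so that $N_j\ge[x^j]e^{L(x)}$. The upper and lower bounds differ only by the factor $1/(1-x)$, which is analytic and nonzero at the dominant singularity of $e^{L(x)}$ (identified below); hence the two sides are of the same asymptotic order, and $N_j=\Theta(\cdots)$ will follow once $[x^j]e^{L(x)}$ is understood asymptotically.

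\textbf{Asymptotics via the saddle-point method.} In the disc $\lvert x\rvert<p^{-1/2}$ the function $L$ is analytic apart from the simple pole at $x=1/p$ contributed by the $k=1$ summand; thus $e^{L(x)}$ has an isolated, dominant essential singularity there. I would isolate it by writing $L(x)=\tfrac{\mu}{1-px}+g(x)$ with $\mu=(p-1)^2/p$, where $g$ is analytic on a neighbourhood of the closed disc of radius $1/p$ and a direct evaluation gives $g(1/p)=\mu(\sigma-1)$ for $\sigma$ as in the statement. Then $\tfrac1{1-x}e^{L(x)}=h(x)\,e^{\mu/(1-px)}$ with $h=e^{g}/(1-x)$ analytic and nonzero at $1/p$. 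Applying the saddle-point method, the saddle sits at $x^\ast\sim\tfrac1p(1-\sqrt{\mu/j})$, the analytic prefactor is frozen at $h(1/p)$ up to a relative error $\LandauO(j^{-1/2})$, and the standard expansion of $e^{c/(1-z)}$ (with $c=\mu$, after $z=px$) contributes the factor $p^j$, the stretched exponential $e^{2\sqrt{\mu j}}$, the polynomial factor $j^{-3/4}$, and a subexponential correction $e^{\mu/2}$ which together with $e^{g(1/p)}=e^{\mu(\sigma-1)}$ produces the constant factor $e^{\mu(\sigma-1/2)}$; combining these with the algebraic factors of the saddle-point formula gives the asserted asymptotic expression. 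The same analysis applied to $[x^j]e^{L(x)}$ (omitting $1/(1-x)$) gives a lower bound of the identical order, whence $N_j=\Theta\bigl(p^je^{2\sqrt{\mu j}}j^{-3/4}\bigr)$. I expect this saddle-point estimate—bounding the integral away from $x=1/p$ and carrying the analytic prefactor through the expansion—to be the principal technical obstacle.

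\textbf{Transfer to $P'_j$.} Finally, a monomial occurs in $P'_j=P_0+\cdots+P_{j-1}$ only if it occurs in some $P_i$ with $i<j$, hence only if its first-occurrence degree is $\le j-1$; this gives the same upper bound as for $N_j$. Conversely, a monomial of first-occurrence degree exactly $j-1$ vanishes in $P_0,\ldots,P_{j-2}$, so its coefficient in $P'_j$ equals its nonzero coefficient in $P_{j-1}$; such monomials therefore occur in $P'_j$ and yield $N'_j\ge[x^{j-1}]e^{L(x)}$. Since $[x^{j-1}]e^{L(x)}$ has the same asymptotic order as $[x^j]\tfrac1{1-x}e^{L(x)}$, the estimates for $N'_j$ follow exactly as for $N_j$.
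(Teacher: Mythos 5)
Your proposal is correct and follows essentially the same route as the paper's proof: the multiset construction over admissible words weighted by $\lvert w\rvert-1$ yields the $\exp$-factor, the factor $1/(1-x)$ accounts for monomials of smaller total weight, Remark~\ref{rem:110} (i.e.\ Proposition~\ref{prp:r_numerator}) supplies the matching lower bound via the monomials of weight exactly $j$, and the asymptotics come from isolating the simple pole of the $k=1$ summand at $x=1/p$ and applying a saddle-point (Wright-type) expansion to $h(x)e^{\mu/(1-px)}$ with $h$ analytic near $1/p$. One caveat: carrying out the computation you describe, with $h(1/p)=\tfrac{p}{p-1}e^{\mu(\sigma-1)}$ and the standard expansion $[z^n]e^{\mu/(1-z)}\sim\tfrac{\mu^{1/4}e^{\mu/2}}{2\sqrt{\pi}}\,e^{2\sqrt{\mu n}}n^{-3/4}$, produces the leading constant $\tfrac{p\,\mu^{1/4}}{p-1}\cdot\tfrac{e^{\mu(\sigma-1/2)}}{2\sqrt{\pi}}$, which differs from the constant $\tfrac{e^{\mu(\sigma-1/2)}}{2p\mu^{1/4}\sqrt{\pi}}$ in the statement by a factor $p^{3/2}$ (a numerical check of the coefficients for $p=2$ supports the former), so you should not assert that your saddle-point calculation reproduces the printed constant without verifying it; the upper bound and the $\Theta$-estimate are unaffected.
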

\begin{proof}
The terms in $P_j$ are built from the variables in $W_j$, see~\eqref{eqn:Wj_def}.
In $W=\bigcup_{j\geq 1}W_j$ there are $p^{k-1}(p-1)^2$ many words $w$ of weight equal to $k$, for $k\geq 2$.
Here the \emph{weight} of a word $w$ is defined by $\abs{w}-1$.
The corresponding generating function is $\mathcal W(x)=(p-1)^2\frac{x}{1-px}$.

First, we want to determine 
the number of monomials having total weight $j$.
These are the monomials that, by~\eqref{eqn:RBG_implicit},
may appear in $P_j$, but cannot appear in $P_{j-1}$.
We obtain therefore the maximal number of ``new'' monomials in $P_j$.

A monomial is nothing else but a multiset of variables in $W$.
Thus, by the multiset construction (see~\cite[page 27]{FS2009})
we obtain the $exp$-part of the generating function in the corollary.
Finally, the factor $\frac{1}{1-x}$ stems from the fact that
also monomials from $P_0,\ldots,P_{j-1}$ are allowed in $P_j$.

For the asymptotic result, we first need to find the dominant singularity,
i.e., the one closest to the origin.
Note that the possible singularities are at $\omega_k^\ell p^{-1/k}$,
for $\ell = 0,\ldots,k-1$,
where $\omega_k = \exp(2 \pi i/k)$ is a $k$-th root of unity.
As $p \geq 2$, the dominant one is found at $1/p$ for $k=1$.
Thus, we may decompose our generating function into
\begin{align*}  \exp\left( \frac{(p-1)^2 x }{ 1- xp} \right) S(x), \end{align*}
where $S(x)$ is the generating function of the remaining factors.
The crucial observation is that $S(x)$ is analytic for $|x| < 1/\sqrt{p}$,
hence, for $|x| < 1/p$.
This is a well-known type of function for which a complete asymptotic expansion is known.
Using Wright's result from~\cite[Theorem~2]{W1933} we get the final result.
The constants are coming from $S(1/p)$.
The last statement follows from Proposition~\ref{prp:r_numerator} and the asymptotic statement,
since all monomials of weight~$j$ actually appear in $P_j$ with a nonzero coefficient,
and their number is a positive portion of the asymptotic main term.
\end{proof}
This type of function was already intensively considered in the literature.
It appears in the enumeration of permutations.
The analysis builds on a saddle point method,
see \cite[Example~VIII.7, p.~562]{FS2009}.
Wright~\cite{W1933} derived the asymptotics for the general form of an exponential singularity we encounter here,
extending the work of Perron~\cite{P1914}.
\begin{remark}\label{rem:number_of_terms}
We note that for the upper bound in Corollary~\ref{cor:number_of_terms} we do not need Proposition~\ref{prp:r_numerator},
but it suffices to use Rowland's paper, see~\eqref{eqn:RBG_implicit}.
The lower bound however uses Proposition~\ref{prp:r_numerator},
which implies that all monomials of weight $j$ do occur in the polynomial $P_j$.

For the prime $p=2$,
we implemented the method of finding the coefficients of $P_j$ by Theorem~\ref{thm:main}
in the Sage Mathematics Software System~\cite{S}.
In particular, we retrieve the formulas for $\vartheta_2(2,n),\ldots,\vartheta_2(4,n)$ obtained by Howard~\cite{H1971}, Spearman and Williams~\cite{SW1999} and Rowland~\cite{R2011} before.
Computing $P_0,\ldots,P_{11}$ took less than five minutes using our implementation,
which is a significant improvement over Rowland's algorithm~\cite{R2011}.

In the following table we compare the actual number of nonzero coefficients in $P_j$ (first line of numbers) with the upper bound from Corollary~\ref{cor:number_of_terms} (second line).
The number of nonzero coefficients in $P_j$ is sequence~\texttt{A275012} in Sloane's OEIS. Rowland notes (see~\texttt{A001316, A163000, A163577} in the OEIS, which are the sequences $n\mapsto\vartheta_2(j,n)$ for $j=0,1,2$) that these numbers give a measure of complexity of the sequences $n\mapsto \vartheta_2(j,n)$.
\[
\begin{array}{cccccccccccc}
P_0&P_1&P_2&P_3&P_4&P_5&P_6&P_7&P_8&P_9&P_{10}&P_{11}\\
1&1&4&11&29&69&174&413&995&2364&5581&13082\\
1&2&5&12&30&72&176&420&1005&2378&5611&13144
\end{array}
\]
From this numerical evidence it seems reasonable to conjecture that the upper bound given in Corollary~\ref{cor:number_of_terms} gives in fact the asymptotic main term of the number~$N_j$ of nonzero coefficients of $P_j$.
However, the exact behaviour of the integers~$N_j$ seems to be difficult to grasp, and remains an open problem at the moment.
\end{remark}
\subsection{Asymptotic behaviour of coefficients of a given monomial}\label{sec:asymp}
In this chapter we study the different asymptotic behaviours exhibited by a sequence $(c_j)_{j\geq 0}$ of coefficients of a monomial.
More precisely, we restrict ourselves to $p=2$ and monomials $X_w$ for $w\in W$.
The following lemma explains how the coefficients of the logarithm of a rational function behave asymptotically.
We will apply it repeatedly in the subsequent discussion.
\begin{lemma}[Coefficient asymptotics of $\log \circ \operatorname{rat}$]
  \label{lem:coefflograt}
  Let $r(x)$ be a rational function defined at $0$ such that $r(0)=1$.
  Choose $L\geq 0$, $\varepsilon_0,\ldots,\varepsilon_{L-1}\in \dZ\setminus\{0\}$ and pairwise different $\xi_0,\ldots,\xi_{L-1}\in\dC\setminus\{0\}$ in such a way that
 \[r(x)=(1-\xi_0 x)^{\varepsilon_0}\cdots (1-\xi_{L-1} x)^{\varepsilon_{L-1}}.\]
  (Note that this decomposition is unique up to the order of the factors.)
  Then
  \begin{equation}\label{eqn:coeffexactlograt}
    [x^n] \log r(x)=-\frac 1n\sum_{0\leq i<L} \varepsilon_i \xi_i^n
  \end{equation}
  for $n\geq 1$.
  In particular, assume without loss of generality that
  $\xi_0,\ldots,\xi_{m-1}$, for some $1\leq m\leq L$,
  have maximal absolute value among the $\xi_i$, and $M=\abs{\xi_0}$.
  Then
  \begin{equation*}
    [x^n] \log r(x)=-\frac 1n\sum_{0\leq i<m} \varepsilon_i \xi_i^n+\LandauO\bigl((M-\varepsilon)^n\bigr)
  \end{equation*}
  for some $\varepsilon>0$.
  If moreover $m=1$, we have for all $k\geq 1$
  \begin{equation}\label{eqn:coeffasylogratprod}
    [x^n] \bigl(\log r(x)\bigr)^{k} =
    k(-\varepsilon_0)^k \bigl(\log n\bigr)^{k-1}\frac{\xi_0^n}{n} \left(1 + \LandauO\left(\frac{1}{n}  \right) \right).
  \end{equation}
\end{lemma}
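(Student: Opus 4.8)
The plan is to establish the three assertions in turn, each building on the previous one. The first and most fundamental is the exact coefficient formula~\eqref{eqn:coeffexactlograt}. Here I would start from the factorisation $r(x)=\prod_{0\le i<L}(1-\xi_i x)^{\varepsilon_i}$, take logarithms to obtain $\log r(x)=\sum_{0\le i<L}\varepsilon_i\log(1-\xi_i x)$, and insert the standard power series $\log(1-\xi x)=-\sum_{n\ge 1}\xi^n x^n/n$, valid for $\lvert x\rvert<1/\lvert\xi\rvert$. Extracting the coefficient of $x^n$ immediately yields $[x^n]\log r(x)=-\frac1n\sum_{0\le i<L}\varepsilon_i\xi_i^n$. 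The uniqueness of the decomposition (up to order) follows since the $\xi_i$ are the reciprocals of the zeros and poles of $r$, with $\varepsilon_i$ the corresponding orders, so no circularity arises.

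The second assertion, isolating the dominant terms, is then purely a matter of bounding a tail. With $\xi_0,\ldots,\xi_{m-1}$ of maximal modulus $M$ and the remaining $\xi_m,\ldots,\xi_{L-1}$ of strictly smaller modulus, I would choose $\varepsilon>0$ so that $\lvert\xi_i\rvert\le M-\varepsilon$ for all $i\ge m$; then $\bigl\lvert-\frac1n\sum_{m\le i<L}\varepsilon_i\xi_i^n\bigr\rvert\le\frac1n\sum_{m\le i<L}\lvert\varepsilon_i\rvert(M-\varepsilon)^n=\LandauO\bigl((M-\varepsilon)^n\bigr)$, giving the stated expansion. This step is routine once~\eqref{eqn:coeffexactlograt} is in hand.

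The genuine work lies in the third assertion~\eqref{eqn:coeffasylogratprod}, which I expect to be the main obstacle. Under the hypothesis $m=1$, the dominant-singularity analysis of the second part gives $\log r(x)=-\varepsilon_0\log(1-\xi_0 x)+g(x)$, where $g$ is analytic in a disc of radius strictly larger than $1/M=1/\lvert\xi_0\rvert$. The task is to understand $[x^n]\bigl(\log r(x)\bigr)^k$. The clean approach is via singularity analysis: $\log r(x)$ has a unique dominant singularity at $x=1/\xi_0$, where it behaves like $-\varepsilon_0\log(1-\xi_0 x)$ (a logarithmic singularity), and taking the $k$-th power produces a singularity of type $\bigl(-\varepsilon_0\log(1-\xi_0 x)\bigr)^k$. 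By the transfer theorems for logarithmic singularities (as in Flajolet--Sedgewick~\cite[Theorem~VI.2 and the surrounding asymptotic scale]{FS2009}), a function singular like $(1-\xi_0 x)^0\bigl(\log\frac{1}{1-\xi_0 x}\bigr)^k$ has $n$-th coefficient asymptotic to $k(\log n)^{k-1}\xi_0^n/n$; the factor $(-\varepsilon_0)^k$ comes from pulling the constant out of the $k$-th power, and the analytic remainder $g$ together with the subdominant expansion of the logarithm contributes only to the relative error $\LandauO(1/n)$. The care needed here is to verify that multiplying out $\bigl(-\varepsilon_0\log(1-\xi_0 x)+g(x)\bigr)^k$ leaves the cross terms involving $g$ subdominant, and to confirm that the second-order term in the asymptotic scale of $\bigl(\log\frac1{1-\xi_0 x}\bigr)^k$ is indeed down by a factor $1/n$ (not merely $1/\log n$) relative to the main term, which is what pins down the error $\bigl(1+\LandauO(1/n)\bigr)$ rather than a weaker estimate.
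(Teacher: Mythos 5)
Your plan follows essentially the same route as the paper's proof: the exact formula \eqref{eqn:coeffexactlograt} and the tail estimate come from expanding each $\log(1-\xi_i x)$ termwise, and for the $k$-th power one writes $\log r(x)=-\varepsilon_0\log\frac{1}{1-\xi_0 x}+g(x)$ with $g$ analytic on a disc of radius strictly larger than $1/\abs{\xi_0}$, and then extracts the asymptotics of the dominant logarithmic singularity. The paper does this last step via Cauchy products and partial summation with $\sum_{i\leq n}1/i=\log n+\LandauO(1)$, citing Flajolet--Sedgewick, Chapter VI; you invoke the transfer theorems for logarithmic singularities directly, which amounts to the same computation.

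The one step you explicitly flag as needing care --- confirming that the relative error in \eqref{eqn:coeffasylogratprod} is $\LandauO(1/n)$ rather than merely $\LandauO(1/\log n)$ --- is in fact the step that cannot be carried out for $k\geq 2$. Already for $r(x)=1/(1-x)$ and $k=2$ one has $[x^n]\bigl(\log\frac{1}{1-x}\bigr)^2=\frac{2}{n}H_{n-1}=\frac{2\log n}{n}\bigl(1+\frac{\gamma}{\log n}+\LandauO(\frac{1}{n\log n})\bigr)$, so the correction to the main term is of relative order $1/\log n$; the same phenomenon is visible in the paper's own worked example for $X_{\tL\tO}^2$, whose exact coefficient is $\frac{(-1)^j}{2^{j+1}}\sum_{i_1+i_2=j}\frac{1}{i_1 i_2}=\frac{(-1)^j H_{j-1}}{j\cdot 2^j}$. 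The cross term $k(-\varepsilon_0)^{k-1}\bigl(\log\frac{1}{1-\xi_0 x}\bigr)^{k-1}g(x)$ likewise contributes at relative order $1/\log n$. So the error factor in \eqref{eqn:coeffasylogratprod} should read $1+\LandauO(1/\log n)$ for $k\geq 2$ (for $k=1$ the claim is correct, indeed the error is exponentially small by \eqref{eqn:coeffexactlograt}). This is a defect of the statement rather than of your argument, and it is harmless for the paper's subsequent applications, which use only the leading-order asymptotics; apart from insisting on the unattainable $\LandauO(1/n)$, your proof is sound and matches the paper's.
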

\begin{proof}
The first two statements follow immediately from the identity
\begin{equation*}
  [x^n] \log\left(\frac 1{1-x}\right) = [x^n] \sum_{n \geq 1} \frac{x^n}n = \frac 1n.
\end{equation*}
The asymptotic statements can be proved using standard results from singularity analysis (see Flajolet and Sedgewick~\cite{FS2009}).
We begin with the case $m=1$.
First of all, the location of the dominant singularity (the one closest to the origin) is responsible for the exponential growth of the coefficients.
Next note that the function $\log r(x)$ is singular if the rational function is either singular, or takes the value $0$.
If we assume that $\varepsilon_0>0$, the dominant singularity comes from the zero $1/\xi_0$ of the numerator of $r(x)$,
and the exponential growth of the $n$-th coefficient is given by $\xi_0^n$.
More precisely, a Taylor expansion of $r(x)$ at $x=r$ shows that
\begin{align*}
  \log\left( r(x) \right)
    &= \log\left( h(x)(x-r)^{d_r} \right)
     = -d_r \log\left( \frac{1}{1-x/r} \right) + \log(h(x)),
\end{align*}
where $\log(h(x))$ is analytic for $|x| \leq |r|+\varepsilon$.
If $\varepsilon_0<0$, we simply swap numerator and denominator of $r(x)$ and adjust the sign.
If $m > 1$ one deals separately with the different singularities.

If higher powers of the logarithm are considered we have to deal with Cauchy products.
In this case one can elementarily show the appearance of the  $\bigl(\log n\bigr)^{k-1}$ terms by partial summation combined with $  \sum_{k=1}^n \frac{1}{k}
      = \log n + \LandauO\left(1\right).  $
For more details we refer to \cite[Chapter VI]{FS2009}.
\end{proof}
\begin{examples}
  Let $p=2$ and consider $\log (r_{\tL\tL\tO}(x)) = \log\Bigl(\frac{1+x/2+x^2/4}{1+x/2}\Bigr)$.
Here, the numerator has the two roots $2 e^{2\pi i/3}$ and $2 e^{-2 \pi i /3}$,
whereas the denominator has the root $-2$.
In this case all roots lie on the same circle $|x|=2$, and therefore cancellations take place (compare Remark~\ref{rem:110}).
By~\eqref{eqn:coeffexactlograt} we obtain
\begin{align*}
  [x^n]\log r_{\tL\tL\tO}(x)
    &= \frac{2^{-n}}{n} \left((-1)^n - e^{2\pi i n/3} - e^{- 2\pi i n/3}\right).
\end{align*}
In this special case we have equality, as no other roots are involved.
Since the radius of convergence is larger than $1$, we can obtain the infinite sum of coefficients $c_j$ of $X_{\tL\tL\tO}$ by inserting $1$ into the generating function:
\begin{align*}
\sum_{j\geq 0}c_{j}
=\sum_{j\geq 0}\left[x^j\right]\log r_{\tL\tL\tO}(x)
&=\lim_{j\rightarrow\infty}\left[x^j\right]\frac{\log r_{\tL\tL\tO}(x)}{1-x}\\
&=\log r_{\tL\tL\tO}(1)
=\log(7/6).
\end{align*}

Now we consider the generating function $\frac 12\bigl(\log(1+x/2)\bigr)^2$ corresponding to the coefficients $c_j$ of $X_{\tL\tO}^2$.
In this case we have, by~\eqref{eqn:coeffasylogratprod},
\[    c_{j}=\frac {(-1)^j\log j}{j\cdot 2^j}\bigl(1+\LandauO(1/j)\bigr).    \]
In this simple case an exact form of the coefficients can be obtained from~\eqref{eqn:coeffexactlograt},
using the Cauchy product of
\[    \log r_{\tL\tO}(x)=\sum_{j\geq 1}\frac {(-1)^j}{j\cdot 2^j}x^j    \]
with itself:
\[c_j=\left[x^j\right]\frac 12\bigl(\log r_{\tL\tO}(x)\bigr)^2
=\frac {(-1)^j}{2^{j+1}}
\sum_{\substack{i_1,i_2\geq 1\\i_1+i_2=j}}\frac 1{i_1i_2}.    \]
Moreover, similarly as in the first example we have
\[    \sum_{j\geq 0}c_{j}=\frac 12\bigl(\log(3/2)\bigr)^2.    \]
\end{examples}
Let us now consider special classes of monomials, whose generating functions have a large radius of convergence and can be evaluated at $x=1$.
\begin{corollary}\label{cor:1s0}
  Consider the words $w = \tL^s \tO$ or $w = \tL^{4s+1} \tO\tO$ for $s \geq 1$. For fixed word $w$ and an integer $k\geq 0$ let $c_j$ be the coefficient of the corresponding monomial $X^k_w$.
  Then the radius of convergence of $\sum_{j\geq 0} c_j x^j$
  is greater than $1$
  (more precisely, equal to $2$ for the first family of values).
  Thus,
  \begin{align*}
    \sum_{j\geq 0} c_j &= \frac{1}{k!}\bigl(\log r_w(1)\bigr)^k.
  \end{align*}
\end{corollary}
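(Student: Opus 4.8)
The plan is to deduce everything from Theorem~\ref{thm:main}: since we look at a single variable raised to the power $k$, it gives $\sum_{j\ge 0}c_jx^j=\frac1{k!}\bigl(\log r_w(x)\bigr)^k$ (the case $k=0$ being trivial). A fixed positive power of a function has the same radius of convergence as the function itself, so it suffices to study $\log r_w$. Writing $r_w$ in the factored form of Lemma~\ref{lem:coefflograt}, the singularities of $\log r_w$ are exactly the zeros and poles of the rational function $r_w$, and the radius of convergence equals the modulus of the one nearest the origin. Hence the whole statement reduces to the claim that $r_w$ has neither a zero nor a pole in the closed unit disc $\{|x|\le 1\}$. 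Once this is shown, $r_w$ has real coefficients and does not vanish on $[0,1]$, so $r_w(x)>0$ there, $\log r_w$ is analytic beyond the unit circle, and the series converges absolutely at $x=1$ to $\frac1{k!}\bigl(\log r_w(1)\bigr)^k$, as claimed.

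For the first family $w=\tL^s\tO$ I would first record the closed form, obtained from Proposition~\ref{prp:r_numerator} together with $\overline T_{\tL^s}\equiv 1$ and the elementary value $\overline T_{\tL^{s-1}\tO}(x)=\frac{1-(x/2)^{s}}{1-x/2}$ (both immediate from~\eqref{eqn:T_rec}), namely
\[
r_{\tL^s\tO}(x)=\frac{1-(x/2)^{s+1}}{1-(x/2)^{s}}.
\]
Cancelling the common factor $1-x/2$ leaves a quotient of two coprime polynomials all of whose roots have the form $x=2\zeta$ with $\zeta$ a root of unity different from $1$; these lie on $|x|=2$, and since $s\ge 1$ at least one is present. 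Thus $r_{\tL^s\tO}$ has no zero or pole in $|x|\le 1$ and its nearest singularity is on $|x|=2$, giving radius of convergence exactly $2$.

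For the second family, set $m=4s+1$ and $w=\tL^m\tO\tO$. The same computation yields $\overline T_{\tL^m\tO}(x)=\frac{1-y^{m+1}}{1-y}$ and $\overline T_{\tL^m\tO\tO}(x)=\frac{N_m(y)}{1-y}$, where $y=x/2$ and $N_m(y)=(1-y^{m+1})+4y^2(1-y^m)$; feeding these into the definition~\eqref{eqn:rw} of $r_w$ (with $w_L=\tL^{m-1}\tO\tO$, $w_R=\tL^m\tO$, $w_{LR}=\tL^{m-1}\tO$) gives
\[
r_{\tL^m\tO\tO}(x)=\frac{N_m(y)\,(1-y^m)}{N_{m-1}(y)\,(1-y^{m+1})}.
\]
The cyclotomic factors $1-y^m$ and $1-y^{m+1}$ contribute only singularities on $|y|=1$, i.e.\ $|x|=2$, so the crux is the location of the zeros of $N_m$ and $N_{m-1}$. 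The key claim is: for $m\equiv 0$ or $1\pmod 4$, the polynomial $N_m$ has no zero with $|y|\le 1/2$. To prove it I would write $N_m=(1+4y^2)-y^{m+1}(1+4y)$; a zero with $|y|\le 1/2$ forces $|1+4y^2|=|y|^{m+1}|1+4y|\le 3\cdot 2^{-(m+1)}$, and since $1+4y^2=4(y-i/2)(y+i/2)$ this pins the zero to distance $\LandauO(2^{-m})$ from $i/2$ or $-i/2$. Setting $y=\pm i/2+\delta$ and solving $N_m=0$ to leading order gives $\delta\approx y^{m+1}(1+4y)/(8y)$ evaluated at $\pm i/2$, whence $|y|^2=\tfrac14+\operatorname{Im}\delta+|\delta|^2$ with $\operatorname{Im}\delta$ of the same sign as $\operatorname{Im}\bigl(i^m(1+2i)\bigr)$; this quantity is $2,1,-2,-1$ for $m\equiv 0,1,2,3\pmod 4$, hence positive exactly when $m\equiv 0,1$, forcing $|y|>1/2$ and contradicting $|y|\le 1/2$. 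For $m=4s+1$ we have $m\equiv 1$ and $m-1\equiv 0\pmod 4$, both admissible, so neither $N_m$ nor $N_{m-1}$ vanishes in $|y|\le 1/2$; combined with the harmless cyclotomic factors this shows $r_{\tL^{4s+1}\tO\tO}$ has no zero or pole in $|x|\le 1$, so its radius of convergence exceeds $1$.

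The only genuinely delicate point, and the main obstacle, is this last zero count for $N_m$. The perturbative sign computation is transparent for large $m$, where the leading imaginary part $\sim 2^{-(m+3)}$ dominates the quadratic correction to $\delta$ as well as the term $|\delta|^2$; to make it rigorous uniformly over all $s\ge 1$ I would bound these corrections explicitly and settle the smallest case $m=5$ by direct estimation. I also note that for $m\equiv 2,3\pmod 4$ the same computation places a zero of $N_m$ strictly inside $|y|<1/2$, which explains why the statement singles out $m\equiv 1\pmod 4$: it is the unique residue making both $m$ and $m-1$ fall in the good classes, so that both numerator and denominator of $r_w$ remain zero-free in the closed unit disc.
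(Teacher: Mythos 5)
Your proposal is correct and follows essentially the same route as the paper: reduce via Theorem~\ref{thm:main} to locating the zeros and poles of $r_w$, get the exact cyclotomic quotient $\frac{1-(x/2)^{s+1}}{1-(x/2)^s}$ for the first family, and for the second family localize the two small roots of the auxiliary polynomials (your $N_m$ is, up to sign and an index shift, the paper's $q_{m+1}$) near $\pm i/2$ and decide inside/outside $|y|=1/2$ by a perturbation whose sign depends on $m \bmod 4$. The only difference is cosmetic — the paper invokes Rouch\'e plus one Newton step where you bound the perturbation directly — and both arguments leave the same uniformity details to be checked.
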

\begin{proof}
  By the main theorem the considered generating function is given by $\frac{1}{k!}\log\bigl(r_w(x)\bigr)^k$. Let us start with the first family of words.
  We need to analyse the rational function $r_w(x) = \frac{T_{\tL^s\tO}(x)}{T_{\tL^{s-1}\tO}(x)}$, as our plan is to apply Lemma~\ref{lem:coefflograt}.
It is not difficult to show that
  \begin{align*}
    T_{\tL^s\tO}(x) &= \frac{1 - \left(x/2\right)^{s+1}}{1-x/2}.
  \end{align*}
  Thus,
  $
    r_w(x) = \frac{1 - \left(x/2\right)^{s+1}}{1 - \left(x/2\right)^{s}},
  $
  and we see that all roots of the numerator and the denominator are located on the circle $|x| = 2$.

  For the second family of words, we get
  \begin{align*}
    T_{\tL^r \tO\tO}(x) &= \frac{q_{r+1}(x/2)}{q_r(x/2)} \cdot \frac{1-(x/2)^{r}}{1-(x/2)^{r+1}}, & \text{ with } &&
    q_r(t) &= 4t^{r+1} + t^r - 4t^2-1.
  \end{align*}
  Hence, we are interested in the roots of the polynomials $q_r(x)$. By Rouche's Theorem there are exactly $2$ roots inside the disc
 $|t|<2^{-1} (1 + 2^{-r+2})$. These two are very close to $\pm i/2$.
 In particular, by Newton's method starting with $i/2$,
 we get after one iteration the very good approximation
  \begin{align*}
    \frac{i}{2} + \left(\frac{i}{2}\right)^{r} \left(\frac{1}{2} - \frac{i}{4}\right) + O\left(\frac{1}{2^{2r}}\right).
  \end{align*}
  Therefore, the roots of $q_r(t)$ are in absolute value greater than $1/2$ for $r \equiv 1,2 \mod 4$ and less than $1/2$ for $r \equiv 0,3 \mod 4$. In particular, for $r \equiv 1 \mod 4$ we have that the roots of $q_{r+1}(x/2)$ and $q_{r}(x/2)$ are both in absolute value greater than $1$. Thus, the radius of convergence is larger than $1$, and it is legitimate to insert $1$.
\end{proof}
By Lemma~\ref{lem:coefflograt} the sequence of coefficients $(c_j)_{j \geq 0}$
for a given word $w$ can exhibit different kinds of behaviours,
corresponding to the position of the zeros and singularities of $r_w(x)$.
Because of the construction of $r_w(x)$, there is a convergence--divergence dichotomy, which we summarize in the following corollary.
\begin{corollary}\label{cor:behaviours}
  Let $w\in W$ and write
$r_w(x)=(1-\xi_0 x)^{\varepsilon_0}\cdots (1-\xi_{L-1} x)^{\varepsilon_{L-1}}$
  with pairwise different, nonzero $\xi_i\in\dC$ and nonzero $\varepsilon_i\in\dZ$,
  such that $\abs{\xi_0}\geq \cdots\geq\abs{\xi_{L-1}}$.
\begin{enumerate}[(a)]
\item \label{item:sing_a}
If $\abs{\xi_0}\leq 1$, the sequence $c_w$ converges, moreover
we have the convergent series
\[\sum_{j\geq 0}c_j=\log r_w(1).\]
\item \label{item:sing_b}
If $\abs{\xi_0}>1$, the sequence $c_w$ diverges.
If moreover $1/\xi_0$ is the only dominant singularity, then
$\xi_0$ is a real number in 
$(-\infty,-1]$, 
and we have $c_w(j)\sim -\varepsilon_0\xi_0^j/j$.
\end{enumerate}
\end{corollary}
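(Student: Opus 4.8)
The plan is to reduce everything to the exact coefficient formula \eqref{eqn:coeffexactlograt} of Lemma~\ref{lem:coefflograt}, which, applied to the single word $w$ (so that $\sum_{j\ge0}c_jx^j=\log r_w(x)$ by Theorem~\ref{thm:main}), reads $c_n=-\frac1n\sum_{0\le i<L}\varepsilon_i\xi_i^n$ for $n\ge1$. Before splitting into cases I would record three structural facts about $r_w$. First, by the recurrence~\eqref{eqn:T_rec} an easy induction shows that every $T_n(x)$, and hence every $\overline T_n(x)$, has nonnegative coefficients and constant term~$1$; such a polynomial is therefore strictly positive on $(0,\infty)$ and has no zero in $[0,\infty)$. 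Second, by Proposition~\ref{prp:r_numerator} we may write $r_w=1+\alpha x^{\mu-1}/(\overline T_{w_L}\overline T_{w_R})$ with $\alpha>0$, each factor in~\eqref{eqn:alpha_def} being positive because $w_{\mu-1}\neq\tO$ and $w_0\neq\tN$. Consequently both the denominator $\overline T_{w_L}\overline T_{w_R}$ and the numerator $\overline T_{w_L}\overline T_{w_R}+\alpha x^{\mu-1}$ are strictly positive on $(0,\infty)$, so \emph{$r_w$ has neither a zero nor a pole on the positive real axis}; equivalently $1/\xi_i\notin(0,\infty)$ for all $i$. Third, $r_w(1)=1+\alpha/(\overline T_{w_L}(1)\overline T_{w_R}(1))>1$, so $x=1$ is neither a zero nor a pole, i.e.\ $\xi_i\neq1$ for all $i$. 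I would also use that $r_w$ has real coefficients, so the non-real $\xi_i$ occur in conjugate pairs with equal exponents.

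For statement~(a), assume $\abs{\xi_0}\le1$. Convergence of the sequence is immediate, since $\abs{c_n}\le\frac1n\sum_i\abs{\varepsilon_i}\to0$. For the value of the series I would invoke the classical identity $\sum_{n\ge1}\xi^n/n=-\log(1-\xi)$, valid for every $\xi$ with $\abs{\xi}\le1$ and $\xi\neq1$ (absolute convergence when $\abs{\xi}<1$, Dirichlet's test when $\abs{\xi}=1$); this applies to each $\xi_i$ because $\xi_i\neq1$. Summing the exact formula termwise then yields $\sum_{n\ge1}c_n=\sum_i\varepsilon_i\log(1-\xi_i)=\log\prod_i(1-\xi_i)^{\varepsilon_i}=\log r_w(1)$, where the conjugate-pair structure guarantees that the right-hand side is the real logarithm of the positive number $r_w(1)$. (Alternatively, having established convergence of $\sum c_n$ this way, one may quote Abel's theorem together with continuity of $r_w$ at~$1$.)

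For statement~(b), assume $\abs{\xi_0}>1$. The radius of convergence of $\log r_w$ equals $1/\abs{\xi_0}<1$, since a zero of $r_w$ forces $\log r_w\to-\infty$ and a pole forces $\log r_w\to+\infty$, so there is a genuine singularity at $x=1/\xi_0$ and none closer to the origin. Hence $\limsup_n\abs{c_n}^{1/n}=\abs{\xi_0}>1$, so $(c_n)$ is unbounded and in particular diverges. Now suppose in addition that $1/\xi_0$ is the only dominant singularity. Because the $\xi_i$ are symmetric about the real axis, a non-real $\xi_0$ would make $1/\overline{\xi_0}$ a second dominant singularity; thus $\xi_0$ is real, and by the second structural fact above $1/\xi_0\notin(0,\infty)$, whence $\xi_0<0$. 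Together with $\abs{\xi_0}>1$ this gives $\xi_0\in(-\infty,-1)\subseteq(-\infty,-1]$. Finally, the asymptotic $c_n\sim-\varepsilon_0\xi_0^n/n$ is the case $m=1$ of the middle statement of Lemma~\ref{lem:coefflograt}: the remaining $\xi_i$ satisfy $\abs{\xi_i}<\abs{\xi_0}$, so their contribution is $\LandauO\bigl((\abs{\xi_0}-\varepsilon)^n\bigr)=o\bigl(\abs{\xi_0}^n/n\bigr)$.

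The main obstacle is precisely the assertion that the unique dominant singularity lies on the negative real axis; this is where combinatorial input, rather than soft analysis, is needed. Everything hinges on the nonnegativity of the coefficients of $T_n$ — which propagates through~\eqref{eqn:T_rec} exactly because each factor $a+1$ and $p-a-1$ is nonnegative — together with the positivity of $\alpha$ in~\eqref{eqn:alpha_def}, which in turn relies on the defining constraints $w_{\mu-1}\neq\tO$ and $w_0\neq\tN$ of an admissible word. Once these exclude any zero or pole of $r_w$ in $[0,\infty)$, the sign of $\xi_0$ follows by combining with the reality of the coefficients. The remaining analytic content is then a routine application of Lemma~\ref{lem:coefflograt} and standard singularity analysis.
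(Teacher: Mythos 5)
Your argument is correct and follows essentially the same route as the paper's proof: the exact formula~\eqref{eqn:coeffexactlograt}, the positivity of the coefficients of $T_v$ to exclude $\xi_i=1$ (and, for part~(b), to force $\xi_0<0$), convergence of $\sum\xi^n/n$ on the unit circle away from $1$ together with Abel's theorem, and Lemma~\ref{lem:coefflograt} for the asymptotics. You merely spell out in more detail the structural facts (no zeros or poles of $r_w$ on $[0,\infty)$, conjugate symmetry) that the paper leaves implicit.
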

\begin{proof}
The case $\abs{\xi_0}<1$ is clear, since the function $\log r_w(x)$ has no singularity in the closed unit disc in this case.
For the case $\abs{\xi_0}=1$ we note that $\xi_i\neq 1$ for all $i$, since $T_v$ has only positive coefficients.
Since the sum $\sum_{j\geq 1}\xi^j/j$ converges for all $j$ on the unit circle such that $j\neq 1$, the sum
$\sum_{j\geq 1} c_j$ converges by~\eqref{eqn:coeffexactlograt}.
Abel's limit theorem finishes the proof for this case.
Finally, case~\eqref{item:sing_b} follows from Lemma~\ref{lem:coefflograt} and the positivity of coefficients of $T_v$.
\end{proof}
In the following, let $p=2$.
We have seen (Corollaries~\ref{cor:log32} and~\ref{cor:1s0}) that
case~(\ref{item:sing_a}) occurs for $w=\tL^s\tO$, where $s\geq 1$.

Case~(\ref{item:sing_b}) appears for $w=\tL\tO\tL\tO$
(dominant singularity at $x_0\sim -0.86408$).
In this case the singularity is coming from the logarithm,
as $r_w(x_0) = 0$.
This is also called a supercritical composition scheme,
as the outer function is responsible for the singularity, see~\cite[Chapter~VI.9]{FS2009}.

This case also appears for $w=\tL\tO\tL\tO\tO$
(dominant singularity again at $x_0\sim -0.86408$).
In this case however,
the denominator of $r_w$ is zero at $x_0$,
thus the singularity is coming from a simple pole.
This is also called a subcritical composition scheme,
as the inner function is responsible for the singularity.

By approximate computation of the roots of $\overline T_v$
using GNU~Octave~\cite{O} we determined all words of length at most $10$ for which case~\eqref{item:sing_a} occurs.
Besides for the words of the form $\tL^s\tO$ or $\tL^{4s+1}\tO\tO$,
this also seems to be the case for the words
$\tL^s\tO\tL^t\tO$, where $s\geq 1$ and $t\geq 2$.
Here is the list of remaining words $w\in W$ of length at most $10$,
not falling into one of these three classes, for which this case occurs as well:
\[
\begin{array}{rrrr}
\tL\tO\tO\tL\tL\tL\tL\tO,&
\tL\tO\tL\tL\tO\tL\tL\tL\tO,&
\tL\tO\tL\tL\tL\tO\tL\tL\tO,&
\tL\tO\tL\tL\tL\tL\tO\tL\tO,\\
\tL\tO\tL\tL\tL\tL\tL\tO\tO,&
\tL\tL\tL\tO\tL\tL\tO\tL\tO,&
\tL\tO\tL\tL\tO\tL\tL\tL\tL\tO,&
\tL\tO\tL\tL\tL\tO\tL\tL\tL\tO,\\
\tL\tO\tL\tL\tL\tL\tO\tL\tL\tO,&
\tL\tL\tO\tL\tL\tO\tL\tL\tL\tO,&
\tL\tL\tO\tL\tL\tL\tO\tL\tL\tO,&
\tL\tL\tO\tL\tL\tL\tL\tO\tL\tO,\\
\tL\tL\tO\tL\tL\tL\tL\tL\tO\tO,&
\tL\tL\tL\tL\tO\tL\tL\tO\tL\tO.
\end{array}
\]
We leave the classification of the words $w\in W$ for which the sum $\sum_{j\geq 0}c_j$ converges as an open problem.
\subsection{A simplified recurrence for $\vartheta_p(j,n)$}\label{sec:simplified}
Rarefying $\vartheta_p(j,n)$ in the first coordinate by the factor $p-1$,
and shifting $j$ by $s_p(n)$,
the recurrence~\eqref{eqn:theta_rec} is transformed into a simpler form:
the term $\nu_p$ disappears,
instead the maximal shift occurring in the first coordinate is $2p-2$.
We pass to the details.
Define, for $k,n\geq 0$,
\begin{align*}
\tilde\vartheta_p(k,n) &=
\begin{cases}
  \vartheta_p\Bigl(\frac{k-s_p(n)}{p-1},n\Bigr), & k\geq s_p(n)\text{ and }p-1\mid k-s_p(n);\\
  0,                     & \text{otherwise}.
\end{cases}
\end{align*}

Setting for simplicity $\tilde\vartheta_p(k,n)=0$ if $k<0$ or $n<0$,
we obtain the following recurrence relation for $k,n\geq 0$,
where we use the Kronecker delta,
which is defined by $\delta_{i,i} = 1$, and $\delta_{i,j}=0$ for $i \neq j$.
\begin{equation*}
\begin{aligned}
\tilde\vartheta_p(0,n)&=\delta_{0,n},&n\geq 0;\\
\tilde\vartheta_p(k,0)&=\delta_{k,0},&k\geq 0,
\end{aligned}
\end{equation*}
and for $n\geq 0$ and $0\leq a<p$,
\begin{equation*}
\tilde\vartheta_p(k,pn+a)=(a+1)\tilde\vartheta_p(k-a,n)+(p-a-1)\tilde\vartheta_p(k-p-a,n-1).
\end{equation*}
The proof of this new recurrence is straightforward and uses the identity
\begin{equation}\label{eqn:legendre_II}
s_p(n+1)-s_p(n)=1-(p-1)\nu_p(n+1),
\end{equation}
which follows immediately by writing $n$ in base $p$
and counting the number of times the digit $\tN$ occurs at the lowest digits of $n$,
and also the recurrence
\begin{align*}\label{eqn:sp_rec}
s_p(pn+a)=s_p(n)+a&&(0\leq a<p).
\end{align*}

In the Tables~\ref{tbl:tilde_theta_values2}--\ref{tbl:tilde_theta_values5} we list some coefficients of $\tilde\vartheta_p(k,n)$ for $p=2,3,5$, respectively.
\def\extrarowheight{1ex}             
\newcommand{\SetWidth}[1]{\makebox[1.6em]{$#1$}}%
\begin{table}[h!]
\[
\begin{array}{c|c@{}c@{}c@{}c@{}c@{}c@{}c@{}c@{}c@{}c@{}c@{}c@{}c@{}c@{}c@{}c@{}c@{}c}
&\SetWidth{0}&\SetWidth{1}&\SetWidth{2}&\SetWidth{3}&\SetWidth{4}&\SetWidth{5}&\SetWidth{6}&\SetWidth{7}&\SetWidth{8}&\SetWidth{9}&\SetWidth{10}&\SetWidth{11}&\SetWidth{12}&\SetWidth{13}&\SetWidth{14}&\SetWidth{15}&\SetWidth{16}&\SetWidth{17}\\
\hline
 0& 1&  &  &  &  &  &  &  &  &  &  &  &  &  &  &  &  &  \\
 1&  & 2& 2&  & 2&  &  &  & 2&  &  &  &  &  &  &  & 2&  \\
 2&  &  & 1& 4& 1& 4& 4&  & 1& 4& 4&  & 4&  &  &  & 1& 4\\
 3&  &  &  &  & 2& 2& 2& 8& 2& 2& 4& 8& 2& 8& 8&  & 2& 2\\
 4&  &  &  &  &  &  & 1&  & 4& 4& 1& 4& 5& 4& 4&16& 4& 4\\
 5&  &  &  &  &  &  &  &  &  &  & 2&  & 2& 2& 2&  & 8& 8\\
 6&  &  &  &  &  &  &  &  &  &  &  &  &  &  & 1&  &  &
\end{array}
\]
\caption{Some coefficients of $\tilde\vartheta_2(k,n)$.
The variable $k$ corresponds to the row number in this table.}
\label{tbl:tilde_theta_values2}
\end{table}
\def\extrarowheight{0ex}             
\def\extrarowheight{1ex}             
\renewcommand{\SetWidth}[1]{\makebox[1.6em]{$#1$}}%
\begin{table}[h!]
\[
\begin{array}{c|c@{}c@{}c@{}c@{}c@{}c@{}c@{}c@{}c@{}c@{}c@{}c@{}c@{}c@{}c@{}c@{}c@{}c}
&\SetWidth{0}&\SetWidth{1}&\SetWidth{2}&\SetWidth{3}&\SetWidth{4}&\SetWidth{5}&\SetWidth{6}&\SetWidth{7}&\SetWidth{8}&\SetWidth{9}&\SetWidth{10}&\SetWidth{11}&\SetWidth{12}&\SetWidth{13}&\SetWidth{14}&\SetWidth{15}&\SetWidth{16}&\SetWidth{17}\\
\hline
 0& 1&  &  &  &  &  &  &  &  &  &  &  &  &  &  &  &  &  \\
 1&  & 2&  & 2&  &  &  &  &  & 2&  &  &  &  &  &  &  &  \\
 2&  &  & 3&  & 4&  & 3&  &  &  & 4&  & 4&  &  &  &  &  \\
 3&  &  &  & 2&  & 6&  & 6&  & 2&  & 6&  & 8&  & 6&  &  \\
 4&  &  &  &  & 1&  & 4&  & 9&  & 4&  & 5&  &12&  &12&  \\
 5&  &  &  &  &  &  &  & 2&  & 6&  & 6&  & 4&  & 8&  &18\\
 6&  &  &  &  &  &  &  &  &  &  & 3&  & 4&  & 3&  & 4&  \\
 7&  &  &  &  &  &  &  &  &  &  &  &  &  & 2&  & 2&  &  \\
 8&  &  &  &  &  &  &  &  &  &  &  &  &  &  &  &  & 1&  \\
\end{array}
\]
\caption{Some coefficients of $\tilde\vartheta_3(k,n)$.
}
\label{tbl:tilde_theta_values3}
\end{table}
\def\extrarowheight{0ex}             
\def\extrarowheight{1ex}             
\renewcommand{\SetWidth}[1]{\makebox[1.6em]{$#1$}}
\begin{table}[h!]
\[
\begin{array}{c|c@{}c@{}c@{}c@{}c@{}c@{}c@{}c@{}c@{}c@{}c@{}c@{}c@{}c@{}c@{}c@{}c@{}c}
&\SetWidth{0}&\SetWidth{1}&\SetWidth{2}&\SetWidth{3}&\SetWidth{4}&\SetWidth{5}&\SetWidth{6}&\SetWidth{7}&\SetWidth{8}&\SetWidth{9}&\SetWidth{10}&\SetWidth{11}&\SetWidth{12}&\SetWidth{13}&\SetWidth{14}&\SetWidth{15}&\SetWidth{16}&\SetWidth{17}\\
\hline
 0& 1&  &  &  &  &  &  &  &  &  &  &  &  &  &  &  &  &  \\
 1&  & 2&  &  &  & 2&  &  &  &  &  &  &  &  &  &  &  &  \\
 2&  &  & 3&  &  &  & 4&  &  &  & 3&  &  &  &  &  &  &  \\
 3&  &  &  & 4&  &  &  & 6&  &  &  & 6&  &  &  & 4&  &  \\
 4&  &  &  &  & 5&  &  &  & 8&  &  &  & 9&  &  &  & 8&  \\
 5&  &  &  &  &  & 4&  &  &  &10&  &  &  &12&  &  &  &12\\
 6&  &  &  &  &  &  & 3&  &  &  & 8&  &  &  &15&  &  &  \\
 7&  &  &  &  &  &  &  & 2&  &  &  & 6&  &  &  &12&  &  \\
 8&  &  &  &  &  &  &  &  & 1&  &  &  & 4&  &  &  & 9&  \\
 9&  &  &  &  &  &  &  &  &  &  &  &  &  & 2&  &  &  & 6\\
\end{array}
\]
\caption{Some coefficients of $\tilde\vartheta_5(k,n)$.
}
\label{tbl:tilde_theta_values5}
\end{table}
\def\extrarowheight{0ex}             

We want to derive a product representation for $\tilde\vartheta_p(j,n)$.
In order to do so, we note the well-known fact due to Legendre
stating that
\begin{equation}\label{eqn:legendre_I}
\nu_p(n!)=\frac{n-s_p(n)}{p-1},
\end{equation}
for prime $p$.
This can be proved easily by summing the identity~\eqref{eqn:legendre_II}.
Applying~\eqref{eqn:legendre_I} three times, we obtain
\begin{equation}\label{eqn:legendre_III}
\nu_p\binom{n}{t}=\frac{s_p(n-t)+s_p(t)-s_p(n)}{p-1}.
\end{equation}
We note that, by Kummer's theorem~\cite{K1852},
the left hand side of~\eqref{eqn:legendre_III} is the number of borrows occurring in the subtraction $n-t$.
Let us define the bivariate generating function
$\widetilde{T}(x,z) \coloneqq \sum_{k,n\geq 0}\tilde\vartheta_p(k,n) x^kz^n.$
We will prove that $\widetilde T $ can be written compactly as an infinite product.
By the definition of $\tilde\vartheta$, 
the binomial coefficient $\binom nt$ contributes to $k=s_p(n)+(p-1)\nu_p\binom nt$.
Thus, we obtain by~\eqref{eqn:legendre_III}
  \begin{align*}
    \widetilde{T}(x,z)
      &= \sum_{n \geq 0} z^n \sum_{t=0}^n x^{s_p(n)+(p-1)\nu_p\binom nt}
      = \sum_{n \geq 0} z^n \sum_{t=0}^n x^{s_p(t) + s_p(n-t)}\\
      &= \Biggl(\sum_{n \geq 0} z^n x^{s_p(n)}\Biggr)^2
      = \prod_{i\geq 0}\Bigl(1+xz^{p^i}+x^2z^{2p^i}+\cdots+x^{p-1}z^{(p-1)p^i}\Bigr)^2,
  \end{align*}
where the last equality holds due the uniqueness of the base-$p$ expansion of an integer $n$.
This product representation should be compared to~\cite[Equations~(3.3), (3.12)]{C1967}.
Since Carlitz does not use the transformation in the first coordinate,
his product takes a more complicated form.
For $p=2$ we have the special case
\begin{align*}\label{eqn:tilde_T_product2}
  \sum_{k,n\geq 0}\tilde\vartheta_2(k,n) x^kz^n
&= \prod_{i\geq 0}\Bigl(1+xz^{2^i}\Bigr)^2.
\end{align*}
We note that this product representation can be used for an alternative proof of Carlitz' recurrence~\eqref{eqn:carlitz_rec}.

We finish this section with a remark on divisibility in \emph{columns} of Pascal's triangle.
\subsection{Divisibility in columns of Pascal's triangle}\label{sec:columns}
In the recent paper~\cite{DKS2016} by Drmota, Kauers, and the first author,
we deal with a conjecture by Cusick (private communication, 2012, 2015)
stating that
\[c_t\coloneqq\dens\{m\geq 0:s_2(m+t)\geq s_2(m)\}>1/2,\]
for all $t\geq 0$.
Here $\dens A$ denotes the asymptotic density of a set $A\subseteq \dN$, which exists in this case.
By~\eqref{eqn:legendre_III} this corresponds to a problem on divisibility in columns of Pascal's triangle:
if we define
$\textstyle \rho_2(j,t)=\dens\bigl\{m\geq 0:\nu_2\binom{m+t}{m}=j\bigr\}$%
\footnote{In~\cite{DKS2016}, we use the notations
$\delta(j,t)=\dens\bigl\{m\geq 0:s_2(m+t)-s_2(m)=j\}$ for all $j\in\dZ$,
and $b_{2^j}=\dens\bigl\{m:2^j\nmid\binom{m+t}{m}\}$.
We have
$\rho_2(j,t)=\delta(s_2(t)-j,t)$ for all $j\geq 0$
and
$b_{2^j}(t)=\rho_2(0,t)+\cdots+\rho_2(j-1,t)$ for $j\geq 1$.
},
the conjecture states that
\begin{equation*}
\sum_{j\leq s_2(t)}\rho_2(j,t)>1/2.
\end{equation*}
We gave in~\cite[Theorem~1]{DKS2016} a partial answer,
solving the conjecture for almost all $t$ in the sense of asymptotic density.
More precisely, we proved that for all $\varepsilon>0$,
\[\bigl\lvert\{t\leq T:1/2<c_t<1/2+\varepsilon\}\bigr\rvert=T+\LandauO(T/\log T).\]
The full statement of Cusick's conjecture is however still an open problem.
We also want to note the recent work by Emme and Hubert~\cite{EH2016} (preprint), which continues earlier work by Emme and Prikhod'ko~\cite{EP2015} (preprint).
They proved that for almost all $X\in\{0,1\}^\dN$ with respect to the balanced Bernoulli measure the values
\[\dens\bigl\{n\in \dN: s_2(n+a_X(k))-s_2(n)\leq x\sqrt{k/2}\bigr\}\]
converge pointwise to the standard normal distribution as $k\rightarrow\infty$,
where $a_X(k)=\sum_{0\leq j<k}X_j2^j$.

Surprisingly, the ``column densities'' $\rho_2(j,t)$ can be expressed by the same polynomial $P_j$ as the ``row counts'' $\vartheta_2(j,n)$ (see~\cite[Sections~3.2 and~3.3]{DKS2016}).
We have $\rho_2(0,t)=2^{-\abs{t}_\tL}$ and, for example,
\begin{align*}
\rho_2(1,t)/\rho_2(0,t)&=\frac 12\abs{t}_{\tO\tL},\\[2mm]
\rho_2(2,t)/\rho_2(0,t)&=-\frac 18\abs{t}_{\tO\tL}+\frac 18\abs{t}_{\tO\tL}^2
+\abs{t}_{\tO\tL\tL}+\frac 14\abs{t}_{\tO\tO\tL}.
\end{align*}
In general, if we denote by $\overline w$ the Boolean complement of the word $w\in W$, these expressions are obtained by inserting the value $\abs{t}_{\overline w}$ for the variable $X_w$ in $P_j$ (compare to~\eqref{eqn:theta_rep}):
\begin{equation*}\label{eqn:rho_rep}
t\mapsto \bigl(\abs{t}_{\overline w}\bigr)_{w\in W_j}\mapsto P_j\Bigl(\bigl(\abs{t}_{\overline w}\bigr)_{w\in W_j}\Bigr)
=\frac{\rho_2(j,t)}{\rho_2(0,t)}.
\end{equation*}
\section{Proofs}\label{sec:proofs}
\begin{proof}[Proof of Proposition~\ref{prp:unique_polynomial}]
Assume that $P_j$ and $\widetilde P_j$ are two polynomials in the variables $X_w$ ($w\in W$), representing $\vartheta(j,n)/\vartheta(0,n)$,
and let $R$ be the maximal degree with which a variable $X_w$ occurs in $P_j$ or $\widetilde P_j$.
Moreover, let $\ell$ be such that $\ell+1$ is the maximal length of a word $w$ such that the variable $X_w$ occurs in one of the polynomials.
The strategy is to compute the coefficients of a polynomial starting from its values.
For a multivariate polynomial in $M$ variables,
where the degree of each variable is bounded by $R$,
this can be done by evaluating the polynomial at each tuple in $\{0,\ldots,R\}^M$,
and applying recursively the fact that a univariate polynomial $q$ is determined by $\deg q+1$ of its values.
We adapt this strategy, taking the dependence between the variables into account.

On the set $W_\ell$ we have a partial order $\preceq$ defined by $v\preceq w$
if and only if $v$ is a factor of $w$.
For convenience, we extend this order to a total order on $W_\ell$ and denote it by the same symbol $\preceq$.
Let $w_0,\ldots,w_{M-1}$ be the increasing enumeration of $W_\ell$
(where $M=\abs{W_\ell}$).
We will work with certain ``test integers'', defined as follows.
For a vector $a=(a_m)_{m<M}$ in $\{0,\ldots,R\}^M$ let $n(a)$ be the integer whose binary expansion is given by the concatenation $v_{M-1}\cdots v_0$, where
\[v_m=\bigl(w_m\tN^\ell\tO^\ell\bigr)^{a_m}\bigl(\tN^\ell\tO^\ell\bigr)^{R-a_m}.\]
The idea behind this is that $\tN^\ell\tO^\ell$ acts as a ``separator''
in the sense that admissible factors of $n(a)$ of length $\leq \ell+1$
are contained completely in one of the building blocks
$w_m\tN^\ell\tO^\ell$ or $\tN^\ell\tO^\ell$. 
(At this point the restrictions $w_{\mu-1}\neq\tO$, $w_0\neq\tN$
for a word $w_{\mu-1}\cdots w_0\in W$ come into play.)
By varying the values $a_m$ we can therefore vary the factor count $\lvert\cdot\rvert_{w_m}$ without changing $\lvert\cdot\rvert_{w_{m'}}$ for $m'>m$.
For simplicity, we rename the variables $X_{w_m}$ to $X_m$.
We prove the following statement by induction on $s$.
\begin{claim}
Assume that $s$ is an integer, $0\leq s\leq M$.
For all $a_0,\ldots,a_{M-1}$, $k_0,\ldots,k_{s-1}\in\{0,\ldots,R\}$ we have
\[
\left[X_0^{k_0}\cdots X_{s-1}^{k_{s-1}}\right]
\Bigl(P_j-\widetilde P_j\Bigr)\Bigl(X_0,\ldots,X_{s-1},\abs{n(a)}_{w_s},\ldots,\abs{n(a)}_{w_{M-1} } \Bigr)=0.
\]
\end{claim}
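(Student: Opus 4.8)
The plan is to prove the Claim by induction on $s$, noting that the instance $s=M$ is exactly the assertion that every coefficient of $Q\coloneqq P_j-\widetilde P_j$ vanishes (all variables occurring in $Q$ lie in $W_\ell$), which yields $P_j=\widetilde P_j$ and hence the Proposition. Here $Q$ is a polynomial in $X_0,\dots,X_{M-1}$ in which each variable occurs with degree at most $R$. For the base case $s=0$ the coefficient extraction is empty, so the statement reduces to $Q\bigl((\abs{n(a)}_{w_m})_{m<M}\bigr)=0$ for every $a$, which is immediate from the hypothesis that both $P_j$ and $\widetilde P_j$ represent $\vartheta(j,n)/\vartheta(0,n)$, evaluated at $n=n(a)$.

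For the inductive step I would fix $k_0,\dots,k_{s-1}$ together with the coordinates $a_0,\dots,a_{s-1},a_{s+1},\dots,a_{M-1}$, and consider
\[
h(X_s)\coloneqq\bigl[X_0^{k_0}\cdots X_{s-1}^{k_{s-1}}\bigr]\,Q\bigl(X_0,\dots,X_{s-1},X_s,\abs{n(a)}_{w_{s+1}},\dots,\abs{n(a)}_{w_{M-1}}\bigr),
\]
a polynomial in $X_s$ of degree at most $R$. Substituting $X_s=\abs{n(a)}_{w_s}$ turns this into the $s$-th instance of the Claim, so the induction hypothesis gives $h(\abs{n(a)}_{w_s})=0$. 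Letting $a_s$ range over $\{0,\dots,R\}$ with the remaining coordinates fixed, the plan is to show that $h$ stays the same polynomial and acquires $R+1$ distinct roots; being of degree at most $R$ it then vanishes identically, and reading off $[X_s^{k_s}]h=0$ delivers the Claim for $s+1$.

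Everything therefore rests on two combinatorial facts about the test integers, and this is where the main difficulty lies: (A) as $a_s$ runs over $\{0,\dots,R\}$ with the other coordinates fixed, $\abs{n(a)}_{w_s}$ takes $R+1$ pairwise distinct values, and (B) none of the counts $\abs{n(a)}_{w_{s'}}$ with $s'>s$ changes. The hard part will be justifying the separator role of $\tN^\ell\tO^\ell$: I would first argue that no admissible word of length $\le\ell+1$ can occur straddling the boundary between two consecutive building blocks, since its leading digit is nonzero and hence sits above a separating run $\tO^\ell$, so reaching into the next block would force length $\ge\ell+2$. Consequently each relevant count decomposes as a sum over the individual blocks $w_m\tN^\ell\tO^\ell$ and $\tN^\ell\tO^\ell$.

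Granting this, increasing $a_s$ by one replaces one separator $\tN^\ell\tO^\ell$ by a block $w_s\tN^\ell\tO^\ell$; for (A) this adds a fixed positive number of occurrences of $w_s$, making $\abs{n(a)}_{w_s}$ a strictly increasing affine function of $a_s$. For (B) the key observation is that any admissible factor of $w_s\tN^\ell\tO^\ell$ of length $\le\ell+1$ that uses a digit of the prefix $w_s$ is already a factor of $w_s$: its low end can lie neither in the run $\tN^\ell$ (it would end in $\tN$, violating admissibility) nor in the trailing $\tO^\ell$ (that would force length $\ge\ell+2$), so both ends lie inside $w_s$. Hence varying $a_s$ affects only the counts of factors $w'\preceq w_s$, i.e.\ of words $w_{s'}$ with $s'\le s$, which is precisely (B). These two facts close the induction and establish uniqueness of $P_j$.
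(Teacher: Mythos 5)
Your proposal is correct and follows essentially the same route as the paper: the same induction on $s$ with the same test integers $n(a)$, reducing the step to a univariate polynomial of degree at most $R$ that vanishes at $R+1$ points obtained by varying $a_s$, with the separator $\tN^\ell\tO^\ell$ and the total order refining the factor order guaranteeing that the counts $\abs{n(a)}_{w_{s'}}$ for $s'>s$ are unchanged. Your explicit verification of the distinctness in (A) and the invariance in (B) is welcome detail that the paper only asserts in the setup preceding the claim.
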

The case $s=0$ follows from the assumption that $P_j$ and $\widetilde P_j$ yield the same value for all assignments $X_w=\abs{n}_w$, where $n\geq 0$.
The case $s=M$ is the desired statement that $P_j=\widetilde P_j$,
by the fact that the degree of each variable in $P_j$ and $\widetilde P_j$ is bounded by $R$.
Assume therefore that the statement holds for some $s<M$ and let $a_0,\ldots,a_{M-1},k_0,\ldots,k_{s-1}\in\{0,\ldots,R\}$.
We define polynomials $Q(X_s)$ 
and $\widetilde Q(X_s)$ 
in one variable, of degree at most $R$, by
\[
Q(X_s)=
\left[X_0^{k_0}\cdots X_{s-1}^{k_{s-1}}\right]
P_j\Bigl(X_0,\ldots,X_s,\abs{n(a)}_{w_{s+1}},\ldots,\abs{n(a)}_{w_{M-1} } \Bigr),\]
analogously $\widetilde Q$.
By the definition of the total order $\preceq$ we have
\[ \bigl \lvert n\bigl(a^{(r)} \bigr)\bigr \rvert_{w_m}=\bigl \lvert n(a)\bigr \rvert_{w_m}\]
for $0\leq r\leq R$ and $m>s$, where
\[
a^{(r)}_\ell=\begin{cases}a_\ell,&\ell\neq s;\\r,&\ell=s.\end{cases}
\]
By applying the induction hypothesis for $a^{(0)},\ldots,a^{(R)}$,
we obtain the equality $Q(N)=\widetilde Q(N)$ for the $R+1$ values
$\abs{n\bigl(a^{(0)}\bigr)}_{w_s},\ldots,\abs{n\bigl(a^{(R)}\bigr)}_{w_s}$ of $N$,
therefore
\begin{multline*}
0=\left[X_s^{k_s}\right](Q-\widetilde Q)(X_s)\\=
\left[X_0^{k_0}\cdots X_m^{k_s}\right]
\Bigl(P_j-\widetilde P_j\Bigr)\Bigl(X_0,\ldots,X_s,\abs{n(a)}_{w_{s+1}},\ldots,\abs{n(a)}_{w_{M-1} }\Bigr).
\end{multline*}
This proves that $P_j=\widetilde P_j$.
\end{proof}
\begin{proof}[Proof of Proposition~\ref{prp:telescope}]
Let $v\in \widetilde W\cup\{\varepsilon\}$.
The proof is by induction on the length of $v$,
the case $v\in\{\varepsilon,\tL,\ldots,\tN\}$ being trivial.
Moreover, 
for the words $c\tO^sa$,
where $c\in\{\tL,\ldots,\tN\}$, $s\geq 1$ and $a\in\{\tO,\ldots,\tN-\tL\}$,
we obtain
\[
\prod_{w\in \widetilde W}
\biggl(\frac{\overline T_w\overline T_{w_{LR} } }{\overline T_{w_R}\overline T_{w_L} } \biggr)^{\abs{v}_w}
=
\frac{\overline T_{c\tO^sa}}{\overline T_{c\tO^s}}
\cdot
\frac{\overline T_{c\tO^s}}{\overline T_{c\tO^{s-1} } }
\cdots
\frac{\overline T_{c\tO}}{\overline T_{c}}
\frac{\overline T_c}{\overline T_\varepsilon}
=\overline T_{c\tO^sa}.
\]
Suppose that the statement holds for some $v'\in \widetilde W$.
It is sufficient to show that it is also true for $v=a\tO^sv'$,
where $a\in\{\tL,\ldots,\tN\}$ and $s\geq 0$.

Since words in $\widetilde W$ do not start with the letter $\tO$ (read from left to right),
a factor of $v$ that is an element of $\widetilde W$ is either a factor of $v'$ or a prefix of $v$.
This implies that the product corresponding to $v$
is obtained from the product corresponding to~$v'$, multiplied by
$\overline T_w\overline T_{w_{LR}}/(\overline T_{w_R}\overline T_{w_L})$
for each prefix $w$ of $v$ such that $w\in \widetilde W$. This product of prefixes equals
\[
\prod_{\substack{w \text{ prefix of }v\\w\in \widetilde W} }
\frac{\overline T_w\overline T_{w_{LR} } }{\overline T_{w_R}\overline T_{w_L} }
=
\prod_{\substack{w \text{ prefix of }v\\w\in \widetilde W} }
\frac{\overline T_w}{\overline T_{w_R}}
\prod_{\substack{w \text{ prefix of }v'\\w\in \widetilde W} }
\frac{\overline T_{w_R}}{\overline T_w}
=\frac{\overline T_v}{\overline T_{v'}}.
\]
This shows the desired form and together with the induction hypothesis it yields the claim.
\end{proof}
Finally, we prove Proposition~\ref{prp:r_numerator}.
\begin{proof}[Proof of Proposition~\ref{prp:r_numerator}]
Assume that $w=w_{\mu-1}\cdots w_0\in W$.
The statement we want to prove is equivalent to
\begin{equation}\label{eqn:r_numerator_equivalent}
\overline T_w\overline T_{w_{LR}}-\overline T_{w_L}\overline T_{w_R}=\alpha x^{\mu-1},
\end{equation}
where
\[    \alpha=p^{\mu-2}\frac{w_{\mu-1}}{w_{\mu-1}+1}\frac{p-w_0-1}{w_0+1}
\prod_{2\leq d\leq p}d^{-2\lvert w'\rvert_{d-\tL}},    \]
and $w'$ is obtained from $w$ by omitting the left- and rightmost digits.
We want to prove the statement by induction on the \emph{right depth} of $w\in W$.
This is the number of right truncations needed to map $w$ to a \emph{base case}, which are words $v$ such that $v_L=\varepsilon$. Note that these are exactly the words of the form $v=c\tO^t$, where $c\in\{\tL,\ldots,\tN\}$ and $t\geq 0$.

We proceed to evaluating $\overline T_w\overline T_{w_{LR}}-\overline T_{w_L}\overline T_{w_R}$ for the base cases,
thus confirming~\eqref{eqn:r_numerator_equivalent} for these cases.
If $w=c\tO^t$,
where $t\geq 1$ and
$c\in\{\tL,\ldots,\tN\}$,
it follows by induction, using~\eqref{eqn:T_rec}, that
\[
\begin{aligned}
\overline T_{c\tO^t}(x)&=
1+\frac{p-1}{p}\frac{c}{c+1}\bigl((px)^1+\cdots+(px)^t\bigr),\\
\overline T_{c\tO^{t-1}}(x)&=
1+\frac{p-1}{p}\frac{c}{c+1}\bigl((px)^1+\cdots+(px)^{t-1}\bigr),
\end{aligned}
\]
therefore
\[
\begin{aligned}
\overline T_w\overline T_{w_{LR}}-\overline T_{w_L}\overline T_{w_R}
&=
\overline T_w-\overline T_{w_R}=
\frac{c}{c+1}(p-1)p^{t-1}x^t.
\end{aligned}
\]
Equation~\eqref{eqn:r_numerator_equivalent} therefore holds for the base cases.
Assume that we have already established the statement for all $w\in W$
having right depth $\leq d-1$, where $d\geq 1$,
and assume that $\widetilde w\in W$ has right depth equal to $d$.
Then $\widetilde w$ is of (exactly) one of the following forms,
for some nontrivial word $w\in\{\tO,\ldots,\tN\}^*$.
\begin{align}
wb\tO,&&&b\in\{\tL,\ldots,\tN\};\label{item:first_case}\\
wb\tO^t,&&&b\in\{\tL,\ldots,\tN\},t\geq 2;\label{item:second_case}\\
wa,&&&a\in\{\tL,\ldots,\tN-\tL\}.\label{item:third_case}
\end{align}

We will use the following auxiliary formulas.
If $wb\in \widetilde W$, where $b\in\{\tL,\ldots,\tN\}$,
then
\begin{multline}\label{eqn:LtoLR}
T_{(wb)-\tL}T_{(wb)_L}-T_{(wb)_L-\tL}T_{wb}
=\frac{p}{p-1}\Bigl(T_{w\tO}T_{(w\tO)_{LR}}-T_{(w\tO)_L}T_{(w\tO)_R}\Bigr).
\end{multline}
If moreover $w=w_{\mu-1}\cdots w_r\tO^r\in W$, where $r\geq 0$ is maximal,
and $w_L\neq\varepsilon$ is satisfied,
we have
\begin{equation}\label{eqn:LtoLR2}
x^{r+1}\Bigl(T_{w-\tL}T_{w_L}-T_{w_L-\tL}T_{w}\Bigr)
=\frac{1}{p-1}\Bigl(T_{w\tO}T_{(w\tO)_{LR}}-T_{(w\tO)_L}T_{(w\tO)_R}\Bigr).
\end{equation}
Let us now prove these formulas.
Write $w=w_{\mu-1}\cdots w_r\tO^r$ with $r\geq 0$ maximal.
We handle the case $w_L=\varepsilon$ separately.
In this case, we have
\begin{align*}
\hspace{1em}&\hspace{-1em}
T_{(wb)-\tL}T_{(wb)_L}-T_{(wb)_L-\tL}T_{wb}
\\&=
\bigl(bT_w+(p-b)x^{r+1}T_{w-1}\bigr)T_b
-T_{b-1}\bigl((b+1)T_w+(p-b-1)x^{r+1}T_{w-1}\bigr)
\\&=x^{r+1}\bigl((p-b)(b+1)-b(p-b-1)\bigr)T_{w-1}
\\&=x^{r+1}pT_{w-1}
\end{align*}
and
\begin{align*}
T_{w\tO}T_{(w\tO)_{LR}}-T_{(w\tO)_L}T_{(w\tO)_R}
&=\bigl(T_w+(p-1)x^{r+1}T_{w-1}\bigr)-T_w
\\&=x^{r+1}(p-1)T_{w-1},
\end{align*}
which yields~\eqref{eqn:LtoLR} for the case $w_L=\varepsilon$.
Assume now that $w_L\neq \varepsilon$.
Then $r$ is also the number of zeros at the low digits of $w_L$.
Therefore
\begin{align*}
\hspace{1em}&\hspace{-1em}
T_{(wb)-\tL}T_{(wb)_L}-T_{(wb)_L-\tL}T_{wb}
\\&=\bigl(bT_w+(p-b)x^{r+1}T_{w-\tL}\bigr)\bigl((b+1)T_{w_L}+(p-b-1)x^{r+1}T_{w_L-\tL}\bigr)\\
&-\bigl(bT_{w_L}+(p-b)x^{r+1}T_{w_L-\tL}\bigr)\bigl((b+1)T_w+(p-b-1)x^{r+1}T_{w-\tL}\bigr)\\
&=px^{r+1}\bigl(T_{w-\tL}T_{w_L}-T_{w_L-\tL}T_w\bigr),
\end{align*}
moreover
\begin{align*}
\hspace{1em}&\hspace{-1em}
T_{w\tO}T_{(w\tO)_{LR}}-T_{(w\tO)_L}T_{(w\tO)_R}
\\&=\bigl(T_w+(p-1)x^{r+1}T_{w-1}\bigr)T_{w_L}
+\bigl(T_{w_L}+(p-1)x^{r+1}T_{w_L-1}\bigr)T_{w}
\\&=(p-1)x^{r+1}\bigl(T_{w-1}T_{w_L}-T_{w_L-1}T_w\bigr),
\end{align*}
which proves the claim.

We have to treat the cases~\eqref{item:first_case}--\eqref{item:third_case}.
Assume that $\widetilde w=wb\tO$,
where $b\in\{\tL,\ldots,\tN\}$.
We have $\widetilde w_L=(wb)_L\tO$ and therefore
we obtain by~\eqref{eqn:LtoLR}
\begin{align*}
\hspace{1em}&\hspace{-1em}
T_{\widetilde w}T_{\widetilde w_{LR}}-T_{\widetilde w_L}T_{\widetilde w_R}
=
T_{wb\tO}T_{(wb)_L}-T_{(wb)_L\tO}T_{wb}
\\&=
\bigl(T_{wb}+(p-1)xT_{(wb)-\tL}\bigr)T_{(wb)_L}
-\bigl(T_{(wb)_L}+(p-1)xT_{(wb)_L-\tL}\bigr)T_{wb}\\
&=(p-1)x\bigl(T_{(wb)-\tL}T_{(wb)_L}-T_{(wb)_L-\tL}T_{wb}\bigr)\\
&=px
\bigl(T_{w\tO} T_{(w\tO)_{LR}}-T_{(w\tO)_L} T_{(w\tO)_R}\bigr).
\end{align*}
It follows that
\begin{align*}
\overline T_{\widetilde w}\overline T_{\widetilde w_{LR}}
-\overline T_{\widetilde w_L}\overline T_{\widetilde w_R}
&=\frac{px}{(b+1)^2}
\bigl(\overline T_{w\tO}\overline T_{(w\tO)_{LR}}-\overline T_{(w\tO)_L}\overline T_{(w\tO)_R}\bigr).
\end{align*}
Since the right depth of $w\tO$ is smaller than $d$,
we can apply the induction hypothesis. This finishes the case~\eqref{item:first_case}.
Now we assume that $\widetilde w=wb\tO^t$,
where $b\in\{\tL,\ldots,\tN\}$ and $t\geq 2$.
We first note that for a finite word $v\in \{\tO,\ldots,\tN\}^*$
we have the identity
$T_{vb\tO^t}=T_{vb\tO^{t-1}}+(p-1)x^tT_{vb\tO^{t-1}-\tL}
=T_{vb\tO^{t-1}}+(p-1)x^tp^{t-1}T_{v(b-\tL)}$,
analogously for $t-1$ instead of $t$,
therefore
\[T_{vb\tO^t}=(1+px)T_{vb\tO^{t-1}}-pxT_{vb\tO^{t-2}}.\]
We may therefore calculate:
\begin{align*}
T_{\widetilde w}T_{\widetilde w_{LR}}-T_{\widetilde w_L}T_{\widetilde w_R}
&=
\Bigl((1+px)T_{wb\tO^{t-1}}-pxT_{wb\tO^{t-2}}\Bigr)T_{w_Lb\tO^{t-1}}
\\&-
\Bigl((1+px)T_{w_Lb\tO^{t-1}}-pxT_{w_Lb\tO^{t-2}}\Bigr)T_{wb\tO^{t-1}}\\
&=
px\bigl(T_{wb\tO^{t-1}}T_{(wb\tO^{t-1})_{LR}}-T_{(wb\tO^{t-1})_L}T_{(wb\tO^{t-1})_R}\bigr).
\end{align*}
It follows that
\[
\overline T_{\widetilde w}\overline T_{\widetilde w_{LR}}-\overline T_{\widetilde w_L}\overline T_{\widetilde w_R}
=
px\bigl(\overline T_{wb\tO^{t-1}}\overline T_{(wb\tO^{t-1})_{LR}}-\overline T_{(wb\tO^{t-1})_L}\overline T_{(wb\tO^{t-1})_R}\bigr)
\]
and we can use the induction hypothesis.
We proceed to the third case.
Assume that $\widetilde w=wa$,
where $w=w_{\mu-1}\cdots w_r\tO^r$ and $r\geq 0$ is maximal, and $a\in\{\tL,\ldots,\tN-\tL\}$.
In the case that $w_L=\varepsilon$, we have
\begin{align*}
T_{\widetilde w}T_{\widetilde w_{LR}}-T_{\widetilde w_L}T_{\widetilde w_R}
&=T_{wa}-(a+1)T_w
\\&=(p-a-1)x^{r+1}T_{w-1}
\\&=\frac{p-a-1}{p-1}
\bigl(T_{w\tO}T_{(w\tO)_{LR}}-T_{(w\tO)_L}-T_{(w\tO)_R}\bigr).
\end{align*}

If $w_L\neq\varepsilon$, we obtain by~\eqref{eqn:LtoLR2}
\begin{align*}
T_{\widetilde w}T_{\widetilde w_{LR}}-T_{\widetilde w_L}T_{\widetilde w_R}
&= \bigl((a+1)T_w+(p-a-1)x^{r+1}T_{w-\tL}\bigr)T_{w_L}\\
&-\bigl((a+1)T_{w_L}+(p-a-1)x^{r+1}T_{w_L-\tL}\bigr)T_w
\\&=(p-a-1)x^{r+1}\bigl(T_{w-\tL}T_{w_L}-T_{w_L-\tL}T_w\bigr)\\
&=
\frac{p-a-1}{p-1}
\bigl(T_{w\tO}T_{(w\tO)_{LR}}-T_{(w\tO)_L}T_{(w\tO)_R}\bigr).
\end{align*}
therefore
\begin{align*}
\overline T_{\widetilde w}\overline T_{\widetilde w_{LR}}-\overline T_{\widetilde w_L}\overline T_{\widetilde w_R}&=
\frac{p-a-1}{a+1}\frac{1}{p-1}
\bigl(\overline T_{w\tO}\overline T_{(w\tO)_{LR}}-\overline T_{(w\tO)_L}\overline T_{(w\tO)_R}\bigr).
\end{align*}
Now one of the cases~\eqref{item:first_case} or~\eqref{item:second_case} is applicable and it is readily checked that~\eqref{eqn:r_numerator_equivalent} is satisfied.
The proof is complete.
\end{proof}

\end{document}